\def\R{\mathbb{R}}
\newtheorem{theorem}{Theorem}
\newtheorem{lemma}[theorem]{Lemma}
\newenvironment{proof}{\noindent {\it Proof}~}{}
\title{Stability of ADI schemes for multidimensional\\ diffusion
equations with mixed derivative terms}
\author{K.~J.~in 't Hout\footnote{Department of Mathematics and Computer Science,
University of Antwerp, Middelheimlaan 1, B-2020 Antwerp, Belgium.
\mbox{Email}: \texttt{\{karel.inthout,chittaranjan.mishra\}@ua.ac.be}.}
~and C. Mishra\footnotemark[\value{footnote}]
}
\date{April 30, 2012}
\begin{document}

\maketitle

\begin{abstract}
\noindent
In this paper the unconditional stability of four well-known
ADI schemes is analyzed in the application to time-dependent
multidimensional diffusion equations with mixed derivative
terms.
Necessary and sufficient conditions on the parameter $\theta$
of each scheme are obtained that take into account the actual
size of the mixed derivative coefficients.
Our results generalize results obtained previously by Craig
\& Sneyd (1988) and In 't Hout \& Welfert (2009).
Numerical experiments are presented illustrating our main
theorems.
\end{abstract}

\section{Introduction}\label{intro}
\subsection{Multidimensional diffusion equations with mixed derivative terms}
This paper is concerned with stability in the numerical solution of
initial-boundary value problems for time-dependent multidimensional
diffusion equations containing mixed spatial-derivative terms,
\begin{equation}
\label{PDE}
\frac{\partial u}{\partial t} = \sum_{i\ne j} d_{ij} u_{x_ix_j}
+ d_{11} u_{x_1x_1}+ d_{22} u_{x_2x_2} + \cdots + d_{kk} u_{x_kx_k}
\end{equation}
Here $k\ge 2$ is any given integer and $D=(d_{ij})_{1\le i,j\le k}$\,
denotes any given real, symmetric, positive semidefinite $k\times k$
matrix.
The spatial domain is taken as $\Omega=(0,1)^k$\, and $t>0$.

Multidimensional diffusion equations with mixed derivative terms play an
important role, notably, in the field of financial option pricing theory.
Here these terms represent correlations between underlying stochastic
processes, such as for asset prices, volatilities or interest rates,
see e.g. \cite{AP10,S04,TR00,W99}.

In this paper we consider for given $\gamma \in [0,1]$ the following
useful condition on the relative size of the mixed derivative
coefficients,
\begin{equation}\label{gamma}
|d_{ij}|\le \gamma\, \sqrt{d_{ii}d_{jj}} \quad
\text{whenever}~~1\le i,j\le k,~i\neq j.
\end{equation}
Clearly, $\gamma=0$ yields the smallest possible mixed derivative
coefficients; they all vanish in this case.
Subsequently, $\gamma=1$ admits the largest possible mixed derivative
coefficients, given that matrix $D$ is positive semidefinite.
In most applications in financial mathematics it turns out that
(\ref{gamma}) holds with certain known $0< \gamma < 1$.
More precisely, such $\gamma$ can be determined from the pertinent
correlation coefficients.
The aim of our present paper is to effectively use this knowledge
about $\gamma$ in the stability analysis of numerical schemes,
discussed below, so as to arrive at weaker sufficient stability
conditions than those obtained when considering $\gamma=1$.

\subsection{Alternating Direction Implicit schemes}\label{ADI}
Semidiscretization of initial-boundary value problems for
multidimensional diffusion equations (\ref{PDE}) leads to initial
value problems for very large systems of stiff ordinary differential
equations,
\begin{equation}\label{ODE}
U'(t) = A U(t) + g(t)\quad (t\ge 0), \quad U(0)=U_0\,.
\end{equation}
Here $A$ is a given real $m\times m$ matrix and $g(t)$ (for $t\ge 0$)
and $U_0$ are given real $m\times 1$ vectors.
For the efficient numerical solution of (\ref{ODE}), splitting
schemes of the Alternating Direction Implicit (ADI) type form an
attractive means.
These schemes employ a splitting of the matrix $A$ that corresponds
to the different spatial directions, leading to a substantial
reduction in the amount of computational work per time step compared
to common implicit methods such as Crank--Nicolson.
ADI schemes constitute a popular class of numerical methods for
multidimensional equations in financial mathematics, where mixed
derivative terms are ubiquitous.

Originally ADI schemes were proposed and analyzed for equations
without mixed derivatives, see e.g. \cite{Br61,D62,DR56}.
To formulate ADI schemes in the presence of mixed derivative terms,
consider a splitting of the matrix $A$ into
\begin{equation}\label{splitting}
A = A_0 + A_1 + \cdots + A_k\,,
\end{equation}
where $A_0$ represents all mixed derivative terms in (\ref{PDE}) and
$A_j$ represents the derivative term in the $j$-th spatial direction
for $j=1,2,\ldots,k$.
Split the vector function $g$ in an analogous way,
\[
g=g_0+g_1+\cdots +g_k,
\]
and define
\[F(t,\xi )=A\xi +g(t) ~~{\rm and}~~ F_j(t,\xi )=A_j\xi +g_j(t) \quad {\rm whenever}~
t\ge 0,~ \xi \in\R^m,~ 0\le j\le k.
\]
Let parameter $\theta >0$ be given. Let time step $\Delta t >0$ and
temporal grid points $t_n=n \Delta t$ with integer $n\ge 0$.
For the numerical solution of the semidiscrete system (\ref{ODE}), we
study in this paper four ADI schemes, each successively generating for
$n=1,2,3,\ldots$ approximations $U_n$ to $U(t_n)$:
\vskip0.5cm
\noindent
\textit{Douglas (Do) scheme}
\begin{equation}\label{Do}
\left\{\begin{array}{l}
Y_0 = U_{n-1}+\Delta t F(t_{n-1},U_{n-1}), \\\\
Y_j = Y_{j-1}+\theta\Delta t \left(F_j(t_n,Y_j)-F_j(t_{n-1},U_{n-1})\right),
\quad j=1,2,\ldots,k, \\\\
U_n = Y_k
\end{array}\right.
\end{equation}
\\\\
\noindent
\textit{Craig--Sneyd (CS) scheme}
\begin{equation}\label{CS}
\left\{\begin{array}{l}
Y_0 = U_{n-1}+\Delta t F(t_{n-1},U_{n-1}), \\\\
Y_j = Y_{j-1}+\theta\Delta t \left(F_j(t_n,Y_j)-F_j(t_{n-1},U_{n-1})\right),
\quad j=1,2,\ldots,k, \\\\
\widetilde{Y}_0 = Y_0+ \frac{1}{2} \Delta t \left(F_0(t_n,Y_k)-F_0(t_{n-1},U_{n-1})\right),\\\\
\widetilde{Y}_j = \widetilde{Y}_{j-1}+\theta\Delta t \,(F_j(t_n,\widetilde{Y}_j)-F_j(t_{n-1},U_{n-1})),
\quad j=1,2,\ldots,k, \\\\
U_n = \widetilde{Y}_k
\end{array}\right.
\end{equation}
\\\\
\noindent
\textit{Modified Craig--Sneyd (MCS) scheme}
\begin{equation}\label{MCS}
\left\{\begin{array}{l}
Y_0 = U_{n-1}+\Delta t F(t_{n-1},U_{n-1}), \\\\
Y_j = Y_{j-1}+\theta\Delta t \left(F_j(t_n,Y_j)-F_j(t_{n-1},U_{n-1})\right),
\quad j=1,2,\ldots,k, \\\\
\widehat{Y}_0 = Y_0+ \theta \Delta t \left(F_0(t_n,Y_k)-F_0(t_{n-1},U_{n-1})\right),\\\\
\widetilde{Y}_0 = \widehat{Y}_0+ (\frac{1}{2}-\theta )\Delta t \left(F(t_n,Y_k)-F(t_{n-1},U_{n-1})\right),\\\\
\widetilde{Y}_j = \widetilde{Y}_{j-1}+\theta\Delta t \,(F_j(t_n,\widetilde{Y}_j)-F_j(t_{n-1},U_{n-1})),
\quad j=1,2,\ldots,k, \\\\
U_n = \widetilde{Y}_k
\end{array}\right.
\end{equation}
\\\\
\noindent
\textit{Hundsdorfer--Verwer (HV) scheme}
\begin{equation}\label{HV}
\left\{\begin{array}{l}
Y_0 = U_{n-1}+\Delta t F(t_{n-1},U_{n-1}), \\\\
Y_j = Y_{j-1}+\theta\Delta t \left(F_j(t_n,Y_j)-F_j(t_{n-1},U_{n-1})\right),
\quad j=1,2,\ldots,k, \\\\
\widetilde{Y}_0 = Y_0+\frac{1}{2}\Delta t \left(F(t_n,Y_k)-F(t_{n-1},U_{n-1})\right),\\\\
\widetilde{Y}_j = \widetilde{Y}_{j-1}+\theta\Delta t\,(F_j(t_n,\widetilde{Y}_j)-F_j(t_n,Y_k)),
\quad j=1,2,\ldots,k, \\\\
U_n = \widetilde{Y}_k.
\end{array}\right.
\end{equation}
\vskip0.5cm
A study of the above four ADI schemes shows that the $A_0$ part,
representing all mixed derivative terms, is always treated in an
{\it explicit}\, fashion.
In line with the original ADI idea, the $A_j$ parts for
$j=1,2,\ldots,k$ are successively treated in an {\it implicit}\,
fashion.

In its general form (\ref{Do}) the Douglas scheme has been considered
e.g.~in \cite{VHV79, HV03}.
Special instances include the well-known ADI schemes of Douglas \&
Rachford \cite{DR56}, where $F_0=0$ and $\theta=1$, and of Brian
\cite{Br61} and Douglas \cite{D62}, where $F_0=0$ and
$\theta=\frac{1}{2}$.
McKee \& Mitchell \cite{MM70} first proposed the Do scheme for
equations (\ref{PDE}) where a mixed derivative term is present,
with $k=2$.

It is readily shown that if $A_0$ is nonzero then the classical
order of consistency\footnote{That is, the order for fixed,
nonstiff ODEs.} of the Do scheme is just one, for any given
$\theta$.

The CS, MCS and HV schemes can be viewed as different extensions
to the Do scheme. They each perform a second explicit prediction
stage followed by $k$ implicit correction stages.
The CS scheme was introduced in \cite{CS88} and attains a classical
order equal to two (independently of $A_0$) if $\theta=\frac{1}{2}$.
The MCS scheme was defined in \cite{IHW09} and possesses a classical
order equal to two for any given $\theta$.
Note that the CS and MCS schemes are identical if $\theta=\frac{1}{2}$.
The HV scheme was constructed in \cite{Hun02} and was proposed for
equations with mixed derivative terms in \cite{IHW07}.
Like the MCS scheme, the HV scheme possesses the favorable property
of having a classical order equal to two for any given $\theta$.

For the stability analysis in this paper the following linear scalar
{\it test equation}\, is relevant,
\begin{equation}\label{test}
U'(t) = (\lambda_0+\lambda_1+\cdots+\lambda_k)U(t),
\end{equation}
with complex constants  $\lambda_j$ ($0\le j\le k$).
Application of any given ADI scheme in the case of test equation
(\ref{test}) gives rise to a linear iteration of the form
\[
U_n = M(z_0,z_1,\ldots,z_k)\,U_{n-1}\,.
\]
Here $M$ is a given, fixed, multivariate rational function and
$z_j = \Delta t\, \lambda_j$ ($0\le j\le k$).
The iteration is stable if
\begin{equation}\label{stab}
|M(z_0,z_1,\ldots,z_k)|\le 1.
\end{equation}
Write
\begin{equation}\label{zp}
z = z_1+z_2+\cdots+z_k \quad\text{and}\quad p = (1-\theta z_1)
(1-\theta z_2)\cdots(1-\theta z_k).
\end{equation}
For the schemes (\ref{Do}), (\ref{CS}), (\ref{MCS}), (\ref{HV}) it
is readily verified that $M=R,\,\widetilde{S},\,S,\,T$, respectively,
where
\begin{eqnarray}
R(z_0,z_1,\ldots,z_k)&=&1+\frac{z_0+z}{p}\label{R}\,,\\
\widetilde{S}(z_0,z_1,\ldots,z_k)&=&\label{S1}
1+\frac{z_0+z}{p}+\frac{1}{2}\, \frac{z_0(z_0+z)}{p^2}\,,\\
S(z_0,z_1,\ldots,z_k)&=&\label{S2}
1+\frac{z_0+z}{p}+\theta\,
\frac{z_0(z_0+z)}{p^2}+\left(\tfrac{1}{2}-\theta\right)\frac{(z_0+z)^2}{p^2}\,,\\
T(z_0,z_1,\ldots,z_k)&=&\label{T}
1+2\, \frac{z_0+z}{p}-\frac{z_0+z}{p^2}+\frac{1}{2}\frac{(z_0+z)^2}{p^2}\,.
\end{eqnarray}

\subsection{Finite difference discretization}\label{FD}
For the semidiscretization of (\ref{PDE}) we replace all spatial
derivatives by second-order central finite differences on a
rectangular grid with constant mesh width $\Delta x_i>0$ in the
$x_i$--direction ($1\le i\le k$):
\begin{subeqnarray}\label{discretization}
\left(u_{x_ix_i}\right)_\ell
&\approx & \frac{u_{\ell+e_i}-2u_\ell+u_{\ell-e_i}}{(\Delta x_i)^2},\\
\left(u_{x_ix_j}\right)_\ell \notag
&\approx & \frac{(1+\beta_{ij})(u_{\ell+e_i+e_j}+u_{\ell-e_i-e_j})
-(1-\beta_{ij})(u_{\ell-e_i+e_j}+u_{\ell+e_i-e_j})}{4\Delta x_i \Delta x_j}\\
&&+\frac{\beta_{ij}(4u_\ell-2(u_{\ell+e_i}+u_{\ell+e_j}+u_{\ell-e_i}+u_{\ell-e_j}))}
{4\Delta x_i \Delta x_j},\quad i\ne j.
\end{subeqnarray}
Here $\ell=(\ell_1,\ell_2,\ldots,\ell_k)$ and the unit vectors
$e_1,e_2,\ldots,e_k$ denote multi-indices, $u_\ell$ stands for
$u(\ell_1\Delta x_1,\ell_2\Delta x_2,\ldots,\ell_k\Delta x_k,t)$
and $\beta_{ij}$ ($1\le i\not= j \le k$) are given real parameters
with $\beta_{ij}=\beta_{ji}$.
The righthand side of (\ref{discretization}b) is the most general
second-order finite difference formula for the mixed derivative
$u_{x_ix_j}$ that is based on a centered 9-point stencil.
If $\beta_{ij} = 0$, then (\ref{discretization}b) reduces to the
standard 4-point formula
\[ \left(u_{x_ix_j}\right)_\ell
\approx \frac{u_{\ell+e_i+e_j}+u_{\ell-e_i-e_j}-u_{\ell-e_i+e_j}-u_{\ell+e_i-e_j}}
{4\Delta x_i \Delta x_j}.
\]
In the literature also the choices $\beta_{ij} = -1$ and $\beta_{ij} = 1$
are frequently considered.

\subsection{Stability analysis and outline of this paper}\label{StabAnal}
The aim of this paper is to investigate the stability of the four
ADI schemes (\ref{Do})--(\ref{HV}) in the application to semidiscretized
multidimensional diffusion equations (\ref{PDE}) where the condition
(\ref{gamma}) on the size of the mixed derivative coefficients
is effectively taken into account.
Our stability analysis is equivalent to the well-known von Neumann
(Fourier) analysis.
Accordingly, (\ref{PDE}) is considered with constant diffusion matrix
$D$ and periodic boundary condition and stability pertains to the
$l_2$-norm.
The obtained semidiscrete matrices $A_j$ ($0\le j\le k$) are then all
normal and commuting and can therefore be unitarily diagonalized by a
single matrix.
For any given ADI scheme, one thus arrives at the stability requirement
(\ref{stab}) with $\lambda_j$ eigenvalues of the matrices $A_j$
($0\le j\le k$).

Upon substituting discrete Fourier modes into (\ref{discretization}),
it is readily shown that the scaled eigenvalues $z_j$ are given by
\begin{subeqnarray}\label{eig}
z_0&=&\sum_{i\ne j} r_{ij}\,d_{ij}\,[-\sin\phi_i\sin\phi_j+\beta_{ij}
(1-\cos\phi_i)(1-\cos\phi_j)]\, ,\phantom{xxx}\\
z_j &=&-2r_{jj}d_{jj}(1-\cos\phi_j), \quad j=1,2,\ldots,k.
\end{subeqnarray}
The angles $\phi_j$ are integer multiples of $2\pi/m_j$ with $m_j$
the dimension of the grid in the $x_j$--direction ($1\le j\le k$).
Further,
\[
r_{ij}=\frac{\Delta t}{\Delta x_i\Delta x_j}\quad
\text{for}~~1\le i,j\le k.
\]

Stability results for ADI schemes pertinent to (\ref{gamma}) in
the case of $\gamma= 1$ were derived by Craig \& Sneyd~\cite{CS88}
for the Do and CS schemes and by In 't Hout \& Welfert \cite{IHW09}
for the MCS and HV schemes.
For any given scheme, both necessary and sufficient conditions
on its parameter~$\theta$ were obtained such that the stability
requirement (\ref{stab}) with $(z_0, z_1,\ldots,z_k)$ given by
(\ref{eig}) is unconditionally fulfilled, i.e., for all
$\Delta t >0$ and all $\Delta x_i >0$ ($1\le i\le k$).

The stability analysis of ADI schemes based on (\ref{gamma})
with {\it arbitrary}\, given $\gamma \in [0,1]$ was recently
started in In 't Hout \& Mishra \cite{IHM10}.
For the MCS scheme and $k=2$, the useful result was proved
here that the lower bound
$\theta \ge \max \{\tfrac{1}{4}, \tfrac{1}{6}(\gamma +1) \}$
is sufficient for unconditional stability
(whenever $0\le \gamma\le 1$).

The present paper substantially extends the work of
\cite{CS88,IHM10,IHW09} reviewed above.
Section~\ref{main}~contains the two main results of this paper.
The first main result is Theorem~\ref{theorem1}, which
provides for each of the Do, CS, MCS, HV schemes in
$k=2$ and $k=3$ spatial dimensions a {\it sufficient}\, condition
on the parameter $\theta$ for unconditional stability under
(\ref{gamma}) for arbitrary given $\gamma\in [0,1]$.
The second main result is Theorem~\ref{theorem2}. This yields for
any of the four ADI schemes, any spatial dimension $k\ge 2$ and
any $\gamma \in [0,1]$ a {\it necessary}\, condition on $\theta$
for unconditional stability under (\ref{gamma}).
For each scheme, the obtained necessary and sufficient conditions
coincide if $k=2$ or $k=3$.
%A key feature of the obtained conditions on $\theta$ is that they
%depend on $k$ and $\gamma$, in essence, only via the product
%$(k-1)\gamma$ or $k\gamma$.
Section~\ref{numexp} presents numerical illustrations to the two
main theorems.
The final Section \ref{concl} gives conclusions and issues for
future research.

\setcounter{equation}{0}
\setcounter{theorem}{0}
\section{Main results}\label{main}
In the following we always make the minor assumption that the matrix
$B=(-\beta_{ij})_{1\le i,j\le k}$ with $\beta_{ii}=-1$ ($1\le i\le k$)
is positive semidefinite.
Thus, in particular, $|\beta_{ij}|\le 1$ for all $i, j$.
We note that this assumption on $B$ is weaker than the corresponding
assumption that was made in \cite{IHW09}.

\subsection{Preliminaries}
This section gives two lemmas that shall be used in the proofs
of the main results below.
\begin{lemma}\label{lemma2}
Let $\alpha$, $\delta$ be given real numbers with $0<\delta\le 4$.
Consider the polynomial
\[
P(u,v,w)= \alpha+u^2+v^2+w^2+uvw-\delta(u+v+w)\quad {\it for}~u, v, w \in \R.
\]
Then
\[
P(u,v,w) \ge 0 \quad{\it whenever}~u\ge 0,\, v\ge 0,\, w\ge 0
\]
if and only if
\begin{equation}\label{pol}
(\delta +1)(3-2\sqrt{\delta+1})\ge 1-\alpha ~~{\it and}~~ \delta^2 \le 2\alpha.
\end{equation}
\end{lemma}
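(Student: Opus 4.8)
The plan is to locate the global minimum of $P$ over the closed octant $\{u,v,w\ge 0\}$ and to show it equals $\min\{P_\star,\ \alpha-\tfrac12\delta^2\}$, where $P_\star$ is the value of $P$ at its unique interior critical point; the two inequalities in (\ref{pol}) will then be seen to be exactly $P_\star\ge 0$ and $\alpha-\tfrac12\delta^2\ge 0$. First I would observe that $P$ is coercive on the octant: since $uvw\ge 0$ there, one has $P\ge\alpha+u^2+v^2+w^2-\delta(u+v+w)$, which tends to $+\infty$ as $u^2+v^2+w^2\to\infty$. Hence the sublevel sets of $P$ within the octant are compact and a global minimum is attained, at either an interior point (necessarily a critical point) or a boundary point.

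Next I would classify the interior critical points by solving $\nabla P=0$, i.e.\ $2u+vw=2v+uw=2w+uv=\delta$. Subtracting these equations pairwise factorizes them as $(u-v)(2-w)=0$, $(v-w)(2-u)=0$ and $(u-w)(2-v)=0$. A short case analysis then shows that any non-symmetric solution must have at least two coordinates equal to $2$, which forces the third to equal $\tfrac12(\delta-4)$; this is negative for $\delta<4$ and lands on the boundary when $\delta=4$. Thus the only critical point with $u,v,w>0$ is the symmetric one $u=v=w=\sqrt{1+\delta}-1$. Using the defining relation $t^2=\delta-2t$ I would simplify its value to $P_\star=\alpha-1+(1+\delta)\bigl(3-2\sqrt{1+\delta}\bigr)$, so that $P_\star\ge 0$ is precisely the first inequality in (\ref{pol}).

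For the boundary I would exploit symmetry and treat a single face, say $w=0$, on which $P(u,v,0)=\alpha+u^2+v^2-\delta(u+v)$ is a strictly convex quadratic with interior minimizer $u=v=\tfrac12\delta$ and minimal value $\alpha-\tfrac12\delta^2$. Since this minimizer lies in the relative interior of the face, the edges and the corner of the octant yield only larger values, so the minimum of $P$ over the entire boundary equals $\alpha-\tfrac12\delta^2$, matching the second inequality in (\ref{pol}).

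Finally I would assemble these facts. The attained global minimum is either the interior critical value $P_\star$ or the boundary minimum $\alpha-\tfrac12\delta^2$; as both are genuine values of $P$ at points of the octant, the global minimum equals $\min\{P_\star,\ \alpha-\tfrac12\delta^2\}$, \emph{without} any need to determine whether the symmetric critical point is a minimum, maximum, or saddle. Consequently $P\ge 0$ on the octant if and only if both quantities are nonnegative, which is exactly (\ref{pol}). The main obstacle I anticipate is the critical-point classification in the second step --- making the pairwise factorization rigorous and correctly discarding the spurious solutions for $\delta\le 4$ --- together with the bookkeeping establishing that the formula for the global minimum holds regardless of the type of the symmetric critical point.
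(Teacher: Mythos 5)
Your proposal is correct and follows essentially the same route as the paper's own proof: locate the unique interior critical point $u=v=w=\sqrt{\delta+1}-1$ of the system $2u+vw=2v+uw=2w+uv=\delta$, evaluate $P$ there to obtain the first inequality in (\ref{pol}), and complete the square on the boundary faces to obtain the second. The only difference is that you spell out details the paper leaves as ``a straightforward analysis'' (the pairwise factorization $(u-v)(2-w)=0$ discarding non-symmetric critical points for $0<\delta\le 4$, and the coercivity argument guaranteeing the global minimum is attained), which is a welcome but not substantively different elaboration.
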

\begin{proof}
The critical points $(u,v,w)$ of $P$ are given by the equations
\[
2u+vw = 2v+uw = 2w+uv = \delta.
\]
A straightforward analysis, using $0<\delta\le 4$, shows that there
is precisely one critical point in the domain $u>0,\, v>0,\, w>0$,
and it is given by $u=v=w=\sqrt{\delta+1}-1 =: u^*$.
Inserting this into $P$ and rewriting yields
\begin{eqnarray*}
P(u^*, u^*, u^*)
& = & (u^*+1)^3 - 3\left(\delta+1\right)u^* + \alpha-1\\
& = & \left(\sqrt{\delta+1}\right)^3 - 3\left(\delta+1\right)
      \left(\sqrt{\delta+1} -1\right) + \alpha-1\\
& = & \left(\delta+1\right)\left(3-2\sqrt{\delta+1}\right)+\alpha-1.
\end{eqnarray*}
Hence $P(u^*, u^*, u^*)\ge 0$ if and only if the first inequality
in (\ref{pol}) holds.

Consider next the polynomial $P$ on the boundary of the pertinent
domain.
It is clear that $P(u,v,w) \ge 0$ if $u, v, w \rightarrow\infty$.
Next, on the boundary part $u\ge 0$, $v\ge 0$, $w=0$ there holds
\begin{eqnarray*}
P(u,v,0)
& = & \alpha+u^2+v^2 - \delta(u+v) \\
& = & (u-\tfrac{1}{2}\delta)^2 + (v-\tfrac{1}{2}\delta)^2
      + \alpha - \tfrac{1}{2}\delta^2.
\end{eqnarray*}
Thus $P(u,v,0)\ge 0$ (whenever $u\ge 0$, $v\ge 0$) if and only if the
second inequality in (\ref{pol}) holds.
By symmetry, the result for the other two boundary parts is the same,
which completes the proof.
\begin{flushright}
\mbox{\tiny $\blacksquare$}
\end{flushright}
\end{proof}
\vspace*{10pt}

The subsequent analysis relies upon four key properties of the scaled
eigenvalues (\ref{eig}).
\begin{lemma}\label{lemma1}
Let $z_0, z_1,\ldots, z_k$ be given by (\ref{eig}).
Let $\gamma\in [0,1]$ and assume (\ref{gamma}) holds.
Then:
\begin{subeqnarray}\label{zproperties}
&&all~z_j~are~real,\\
&&z_j\le 0 ~~{\it for}~1\le j\le k,\phantom{xxxxxxxxxxxxx}\\
&&z+z_0 \le 0,\\
&&|z_0| \le \gamma \sum_{i\ne j} \sqrt{z_i z_j}\,.
\end{subeqnarray}
\end{lemma}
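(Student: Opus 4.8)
Properties (\ref{zproperties}a) and (\ref{zproperties}b) will be immediate from the explicit formulas (\ref{eig}): every quantity appearing there is real, and in $z_j=-2r_{jj}d_{jj}(1-\cos\phi_j)$ each factor is nonnegative (with $d_{jj}\ge 0$ since $D$ is positive semidefinite), so $z_j\le 0$. The substance of the lemma lies in (\ref{zproperties}c) and (\ref{zproperties}d), and for both I would first introduce the shorthand $c_j=1-\cos\phi_j\in[0,2]$ and $s_j=\sin\phi_j$, together with the factorization $r_{ij}=\mu_i\mu_j$ where $\mu_j=\sqrt{\Delta t}/\Delta x_j$, and record the exact identity $s_j^2=c_j(2-c_j)$, which is the workhorse throughout.

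For (\ref{zproperties}c) I would split $z_0$ into its ``sine part'' $-\sum_{i\ne j}r_{ij}d_{ij}s_is_j$ and its ``$\beta$ part'' $\sum_{i\ne j}r_{ij}d_{ij}\beta_{ij}c_ic_j$ and bound each separately. Setting $\sigma_j=\mu_j s_j$, the sine part equals $\sum_j d_{jj}\sigma_j^2-\sum_{i,j}d_{ij}\sigma_i\sigma_j$; the full quadratic form is nonnegative by positive semidefiniteness of $D$ and may be discarded, leaving the sine part at most $\sum_j d_{jj}r_{jj}s_j^2$. Here I would use the \emph{exact} identity $s_j^2=2c_j-c_j^2$ rather than the crude $s_j^2\le 2c_j$: this produces $-z$ together with a negative surplus $-\sum_j d_{jj}r_{jj}c_j^2$. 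Setting $\tau_j=\mu_j c_j$, the $\beta$ part equals $\sum_{i\ne j}d_{ij}\beta_{ij}\tau_i\tau_j=\sum_j d_{jj}\tau_j^2-\sum_{i,j}(D\circ B)_{ij}\tau_i\tau_j$, where $B=(-\beta_{ij})$ with $\beta_{ii}=-1$ is the matrix assumed positive semidefinite in Section~\ref{main}. By the Schur product theorem the Hadamard product $D\circ B$ of the two positive semidefinite matrices $D$ and $B$ is itself positive semidefinite, whence the $\beta$ part is at most $\sum_j d_{jj}r_{jj}c_j^2$. Adding the two bounds, the surplus from the sine part exactly cancels the bound on the $\beta$ part, giving $z_0\le -z$, i.e.\ $z+z_0\le 0$.

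For (\ref{zproperties}d) I would first compute $\sqrt{z_iz_j}=2r_{ij}\sqrt{d_{ii}d_{jj}}\sqrt{c_ic_j}$ (legitimate since $z_i,z_j\le 0$), again using $r_{ij}=\sqrt{r_{ii}r_{jj}}$. Applying the triangle inequality to $z_0$ together with $|d_{ij}|\le\gamma\sqrt{d_{ii}d_{jj}}$ from (\ref{gamma}) and $|\beta_{ij}|\le 1$, the claim reduces to the pointwise estimate $|{-s_is_j+\beta_{ij}c_ic_j}|\le 2\sqrt{c_ic_j}$ for each pair $i\ne j$. Bounding the left side by $|s_i|\,|s_j|+c_ic_j$, substituting $|s_j|=\sqrt{c_j(2-c_j)}$ and factoring out $\sqrt{c_ic_j}$ (the case $c_ic_j=0$ being trivial, since then $s_is_j=0$ as well), this becomes $\sqrt{(2-c_i)(2-c_j)}+\sqrt{c_ic_j}\le 2$, which is exactly the Cauchy--Schwarz inequality for the vectors $(\sqrt{2-c_i},\sqrt{c_i})$ and $(\sqrt{2-c_j},\sqrt{c_j})$, each of norm $\sqrt{2}$.

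The main obstacle is the sharp bookkeeping in (\ref{zproperties}c): a naive bound on the sine part wastes the very margin needed to absorb the $\beta$ part, so the crux is to retain the exact identity $s_j^2=c_j(2-c_j)$ and to recognize that the resulting surplus $\sum_j d_{jj}r_{jj}c_j^2$ is precisely what controls the $\beta$ part. Identifying the relevant quadratic form as the Hadamard product $D\circ B$, and thereby pinning down the role of the standing assumption that $B$ is positive semidefinite, is the key structural observation.
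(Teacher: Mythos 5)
Your proposal is correct and follows essentially the same route as the paper: for (\ref{zproperties}c) the paper packages your two estimates into the single identity $-(z+z_0)=\mathbf{u}^{\rm T}D\mathbf{u}+\mathbf{v}^{\rm T}(D\circ B)\mathbf{v}$ with $\mathbf{u}=(\sqrt{r_{jj}}\sin\phi_j)_j$, $\mathbf{v}=(\sqrt{r_{jj}}(1-\cos\phi_j))_j$, so your ``surplus cancellation'' via $s_j^2=c_j(2-c_j)$ is just an additive rewriting of the same decomposition, resting on the same ingredients ($r_{ij}=\sqrt{r_{ii}r_{jj}}$, the identity $\sin^2\phi+(1-\cos\phi)^2=2(1-\cos\phi)$, and the Schur product theorem for $D\circ B$). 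Your proof of (\ref{zproperties}d) is likewise the paper's Cauchy--Schwarz argument in an equivalent factored form.
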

\begin{proof}
Properties (\ref{zproperties}a), (\ref{zproperties}b)
are obvious.
We consider (\ref{zproperties}c) and (\ref{zproperties}d).
Using $r_{ij}=\sqrt{r_{ii}r_{jj}}$ and the simple identity
\[
(\sin\phi)^2+(1-\cos\phi)^2 = 2(1-\cos\phi)
\]
one readily verifies that
\[
-(z+z_0) = \sum_{i,j=1}^k r_{ij}\,d_{ij}\,[\sin\phi_i\sin\phi_j
-\beta_{ij}(1-\cos\phi_i)(1-\cos\phi_j)]
= \mathbf{u}^{\rm T}D\mathbf{u}+\mathbf{v}^{\rm T}(D\circ B)\mathbf{v},
\]
where $\circ$ denotes the Hadamard product of two matrices and
\begin{equation*}
\mathbf{u}=\left(\,\sqrt{r_{jj}}\,\sin\phi_j\,\right)_{1\le j\le k}
~~,~~
\mathbf{v}=\left(\,\sqrt{r_{jj}}\,(1-\cos\phi_j)\,\right)_{1\le j\le k}.
\end{equation*}
It is well-known that the Hadamard product of two positive semidefinite
matrices is also positive semidefinite, see e.g. \cite{HJ85}, and hence,
$z+z_0\le 0$ is obtained.

Next, using the Cauchy--Schwarz inequality and $|\beta_{ij}|\le 1$,
it directly follows that
\[
|\sin\phi_i\sin\phi_j-\beta_{ij}(1-\cos\phi_i)(1-\cos\phi_j)|\le
\sqrt{2(1-\cos\phi_i)}\cdot \sqrt{2(1-\cos\phi_j)}.
\]
Together with (\ref{gamma}) this yields
\[
|z_0|\le \gamma\sum_{i\ne j} \sqrt{r_{ii}r_{jj}}\,\sqrt{d_{ii}d_{jj}}\,
\sqrt{2(1-\cos\phi_i)}\cdot\sqrt{2(1-\cos\phi_j)} = \gamma\sum_{i\ne j}
\sqrt{z_i z_j}\,.
\]
\begin{flushright}
\mbox{\tiny $\blacksquare$}
\end{flushright}
\end{proof}
\eject

\noindent
In the following we shall use the notation $y_j = \sqrt{-\theta z_j}$
(for $1\le j\le k$).
Then
\begin{equation}\label{defyj}
   p = \prod_{j=1}^k (1+y_j^2)\ge 1 \quad \text{and}
\quad z=-\frac{1}{\theta}\sum_{j=1}^k y_j^2\\
\end{equation}
and by (\ref{zproperties}d) there holds
\begin{equation}\label{z0plusz}
    z+z_0 \geq z-|z_0| \geq \sum_{j=1}^k z_j - \gamma\sum_{i\neq j}\sqrt{z_iz_j} =
    -\frac{1}{\theta}\left(\sum_{j=1}^k y_j^2+\gamma\sum_{i\neq j} y_iy_j\right).
\end{equation}

\subsection{Two main theorems}

The first main result gives sufficient conditions on $\theta$ for
unconditional stability of each of the Do, CS, MCS, HV schemes in
the application to (\ref{PDE}) for $k=2$ or $k=3$ under condition
(\ref{gamma}) with arbitrary given $\gamma$.

\begin{theorem}\label{theorem1}
Consider equation (\ref{PDE}) for $k=2$ or $k=3$ with symmetric
positive semidefinite matrix $D$ and periodic boundary condition.
Let $\gamma \in [0,1]$ and assume (\ref{gamma}) holds. Let
(\ref{ODE}), (\ref{splitting}) be obtained by central second-order
FD discretization and splitting as described in Section~\ref{intro}.
Then for the following parameter values $\theta$ the Do, CS, MCS,
HV schemes are unconditionally stable when applied to (\ref{ODE}),
(\ref{splitting}):\\\\
\noindent
$~~~~\bullet$ Do scheme:
\[
\theta \geq\frac{1}{2} ~(if~k=2) ~~~and~~~
\theta \geq\max\left\{\frac{1}{2}, \frac{2(2\gamma+1)}{9}\right\} ~(if~k=3);
~~~~~~~~~~~~~~~~~~~
\]
\noindent
$~~~~\bullet$ CS scheme:
\[
\theta \geq\frac{1}{2} ~(if~k=2~or~k=3);
\phantom{\max\left\{\frac{1}{2}, \frac{2(2\gamma+1)}{9}\right\}}
~~~~~~~~~~~~~~~~~~~~~~~~~~~~~~~~~~~~~~
\]
\noindent
$~~~\bullet$ MCS scheme:
\[
\theta \geq\max\left\{\frac{1}{4}, \frac{\gamma+1}{6}\right\} ~(if~k=2) ~~~and~~~
\theta \geq\max\left\{\frac{1}{4}, \frac{2(2\gamma+1)}{13}\right\} ~(if~k=3);
\]
\noindent
$~~~\bullet$ HV scheme:
\[
~~\theta \geq\max\left\{\frac{1}{4}, \frac{\gamma+1}{4+2\sqrt 2}\right\}  ~(if~k=2)
~~~and~~~
\theta \geq\max\left\{\frac{1}{4}, \frac{2\gamma+1}{4+2\sqrt 3}\right\} ~(if~k=3).
\]
\end{theorem}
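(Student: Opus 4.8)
The plan is to establish the stability requirement (\ref{stab}), $|M|\le 1$ with $M\in\{R,\widetilde S,S,T\}$, for every tuple $(z_0,z_1,\ldots,z_k)$ permitted by Lemma~\ref{lemma1}. I work throughout with $y_j=\sqrt{-\theta z_j}\ge 0$, so that by (\ref{defyj}) one has $p=\prod_{j=1}^k(1+y_j^2)\ge 1$ and $z=-\tfrac1\theta\sum_j y_j^2$, while (\ref{z0plusz}) confines the single combination $w:=z_0+z$ to the interval $-\tfrac1\theta\big(\sum_j y_j^2+\gamma\sum_{i\ne j}y_iy_j\big)\le w\le 0$. A glance at (\ref{R})--(\ref{T}) shows that each of $R,\widetilde S,S,T$ depends on the eigenvalues only through $w$, $z$ and $p$, so the whole test reduces to inequalities in the nonnegative reals $y_1,\ldots,y_k$.

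First I dispose of the automatic half of $|M|\le 1$ in each case. For $R=1+w/p$ the bound $R\le 1$ is immediate from $w\le 0$ and $p\ge 1$. For $\widetilde S$, $S$ and $T$ I write $M+1$ as $p^{-2}$ times a quadratic in $w$ with positive leading coefficient; its discriminant turns out to be negative precisely because $\sum_j y_j^2<p$ (a consequence of $p\ge 1+\sum_j y_j^2$). Hence $M\ge -1$ holds on the whole $w$-axis -- for $S$ and $T$ for every $\theta>0$, and for $\widetilde S$ as soon as $\theta\ge\tfrac12$, after which $\widetilde S\le 1$ is free. Consequently only one genuine inequality survives per scheme: $R\ge -1$ for Do and $S\le 1$, $T\le 1$ for MCS and HV, while CS is already settled by $\theta\ge\tfrac12$.

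Each surviving inequality is monotone in $w$ and hence worst at the lower endpoint of the interval above; substituting that endpoint turns it into a polynomial inequality in $y_1,\ldots,y_k\ge 0$, namely $2\theta p\ge \sum_j y_j^2+\gamma\sum_{i\ne j}y_iy_j$ (Do), $2\theta p+(2\theta-1)\sum_j y_j^2\ge \gamma\sum_{i\ne j}y_iy_j$ (MCS) and $2\theta(2p-1)\ge \sum_j y_j^2+\gamma\sum_{i\ne j}y_iy_j$ (HV). For $k=2$ these are two-variable statements settled by elementary calculus: the symmetric direction $y_1=y_2$ produces the bounds $\tfrac{\gamma+1}{6}$ and $\tfrac{\gamma+1}{4+2\sqrt2}$, while a single $y_j\to\infty$ produces the floor $\tfrac14$ (MCS, HV) or $\tfrac12$ (Do). For $k=3$ this is where Lemma~\ref{lemma2} is the decisive tool: after a suitable change of variables each inequality is reduced to (or bounded below by) the canonical form $P(u,v,w)\ge 0$; reading off the resulting $\alpha,\delta$ and checking $0<\delta\le 4$, the first condition in (\ref{pol}) corresponds to the fully symmetric critical point $y_1=y_2=y_3$ and yields the terms $\tfrac{2(2\gamma+1)}{9}$, $\tfrac{2(2\gamma+1)}{13}$, $\tfrac{2\gamma+1}{4+2\sqrt3}$, the second condition $\delta^2\le 2\alpha$ corresponds to the face $y_3=0$ and reproduces the $k=2$ bound, and the floor $\tfrac14$ (resp.\ $\tfrac12$) comes in through the degenerate direction $y_j\to\infty$.

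The hard part is precisely this $k=3$ reduction to Lemma~\ref{lemma2}. The obstruction is a degree mismatch: the factor $p=\prod_j(1+y_j^2)$ is of degree six in the $y_j$, whereas $P$ is cubic, so identifying the substitution that lowers the effective degree while simultaneously producing the coupling term $uvw$ with unit coefficient is delicate; moreover the mixed sum $\sum_{i\ne j}y_iy_j$ must be accommodated without weakening the constant, which is feasible only because it is tight exactly along the symmetric diagonal $y_1=y_2=y_3$. Confirming that the lower endpoint of $w$ is genuinely extremal, and that no interior critical point other than the symmetric one intervenes, is the crux of the argument; once an inequality sits in the form of Lemma~\ref{lemma2}, the two conditions (\ref{pol}) hand over the stated thresholds by direct computation.
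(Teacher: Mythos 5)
Your proposal is correct and follows essentially the same route as the paper's proof: you reduce $|M|\le 1$ to a single surviving inequality (the other half being automatic via a negative-discriminant argument using $p\ge 1+\sum_j y_j^2$), bound $z_0+z$ from below as in (\ref{z0plusz}), settle $k=2$ by elementary means, and for $k=3$ reduce to Lemma~\ref{lemma2} via the substitution $u=y_1y_2$, $v=y_1y_3$, $w=y_2y_3$ together with the identity $\sum_{i<j}(y_i-y_j)^2=2\sum_j y_j^2-2\sum_{i<j}y_iy_j$ --- exactly the paper's argument, which it carries out for HV and leaves to the reader for Do, CS, MCS, so your unified treatment of all four schemes matches in substance. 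One interpretive side remark is inaccurate but harmless: the second condition $\delta^2\le 2\alpha$ of (\ref{pol}) does not reproduce the $k=2$ bound (for HV it gives $\delta\le 1$, i.e.\ $\theta\ge\tfrac{2\gamma+1}{8}$, which is strictly dominated by the first condition $\delta\le\tfrac{1}{2}\sqrt{3}$ and never binds), yet since your plan reads off both conditions and takes the stronger one, the stated thresholds come out correctly anyway.
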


\begin{proof}
The proofs for the four schemes are similar.
In view of this, we shall consider here the HV scheme and leave the
proofs for the Do, CS and MCS schemes to the reader.\footnote{See,
however, \cite{IHM10} for the MCS scheme if $k=2$.}

Using the properties (\ref{zproperties}a)--(\ref{zproperties}c) of
the scaled eigenvalues, it is readily seen that for $M=T$ the
stability requirement (\ref{stab}) is equivalent to
\begin{subeqnarray}
\label{condHV}
&&2p^2+(2p-1)(z_0+z)+\tfrac{1}{2}(z_0+z)^2 \geq 0 \,,\\
&&2p-1+\tfrac{1}{2}(z_0+z) \geq 0\,.
\end{subeqnarray}
Condition (\ref{condHV}a) is always fulfilled since the discriminant
$(2p-1)^2 - 4p^2 = -4p+1 <0$.
Subsequently, using (\ref{defyj}) and (\ref{z0plusz}) it is easily
seen that if there exists $\kappa > 0$ such that
\begin{equation}
\label{kappa}
2\prod_{j=1}^k (1+y_j^2) - 1 - \kappa\left(\sum_{j=1}^k y_j^2 +\gamma\sum_{i\neq j}y_iy_j\right)\geq 0
\end{equation}
for all $y_j \ge 0$ ($1\le j \le k$), then condition (\ref{condHV}b)
is fulfilled whenever $\theta\geq 1/(2\kappa)$.
\\\\
\noindent
\underline{For $k=2$}\, the inequality (\ref{kappa}) reads
\[
1+2(y_1^2+y_2^2+y_1^2y_2^2) -\kappa(y_1^2+y_2^2+2\gamma y_1y_2)\geq 0,
\]
and this can be rewritten as
\[
(2-\kappa)(y_1-y_2)^2 + 2\left(y_1y_2
+ 1-\tfrac{1}{2}\kappa-\tfrac{1}{2}\kappa\gamma\right)^2 + 1
-2\left(1-\tfrac{1}{2}\kappa-\tfrac{1}{2}\kappa\gamma\right)^2
\geq 0 \, .
\]
Hence, the inequality is satisfied if
\[
\kappa\leq 2 \quad \text{and} \quad \sqrt{2} \left| 1-\tfrac{1}{2}\kappa-\tfrac{1}{2}\kappa\gamma\right|
\leq 1 \, ,
\]
which is equivalent to
\[
\frac{2-\sqrt 2}{\gamma+1} \le \kappa\leq \min\left\{2, \, \frac{2+\sqrt 2}{\gamma+1} \right\}.
\]
Selecting the rightmost value for $\kappa$, yields that (\ref{condHV}b)
holds for $k=2$ whenever
\[
\theta\geq\max\left\{ \frac{1}{4}, \frac{\gamma+1}{4+2\sqrt{2}}\right\}.
\]
\noindent
\underline{For $k=3$}\, the inequality (\ref{kappa}) reads
\[
1+(2-\kappa)\sum_{j=1}^3y_j^2 + 2\sum_{i<j}y_i^2 y_j^2 + 2y_1^2y_2^2y_3^2
-2\kappa\gamma\sum_{i<j}y_iy_j \ge 0,
\]
which, by the identity
\[
\sum_{i < j} (y_i-y_j)^2 = 2 \sum_{j=1}^3y_j^2 - 2 \sum_{i<j}y_i y_j,
\]
is equivalent to
\[
1+\tfrac{1}{2}(2-\kappa)\sum_{i < j} (y_i-y_j)^2 + 2\sum_{i<j}y_i^2 y_j^2 + 2y_1^2y_2^2y_3^2
+(2-\kappa-2\kappa\gamma)\sum_{i<j}y_iy_j \ge 0.
\]
Let $u=y_1y_2$, $v=y_1y_3$, $w=y_2y_3$.
Then $u$, $v$, $w\ge 0$ and the above inequality can be written as
\[
\tfrac{1}{2}(2-\kappa)\sum_{i < j} (y_i-y_j)^2 + 2P(u,v,w) \ge 0,
\]
where
\[
P(u,v,w) = \alpha + u^2+v^2+w^2 + uvw -\delta(u+v+w)
\]
with
\[
\alpha = \tfrac{1}{2}~~~{\rm and}~~~\delta = \tfrac{1}{2}\kappa+\kappa\gamma-1.
\]
Hence, if $\kappa\le 2$ and $P(u,v,w)\ge 0$, then (\ref{kappa}) is
satisfied for $k=3$.
If $\delta\le 0$, i.e.~$\kappa\le 2/(2\gamma+1)$, then obviously
$P(u,v,w)\ge \alpha > 0$.
Next consider $\delta>0$.
Lemma \ref{lemma2} yields that $P(u,v,w)\ge 0$ whenever
\begin{equation}\label{beta}
(\delta +1)(3-2\sqrt{\delta+1})\ge \tfrac{1}{2} ~~{\rm and}~~ \delta \le 1.
\end{equation}
Set $x=\sqrt{\delta+1}$. Then $x>1$ and
\[
(\delta +1)(3-2\sqrt{\delta+1})\ge \tfrac{1}{2} ~\Longleftrightarrow~ 4x^3-6x^2+1 \le 0
~\Longleftrightarrow~ (2x-1)(2x^2-2x-1) \le 0 ~\Longleftrightarrow~ x\le \tfrac{1}{2}+\tfrac{1}{2}\sqrt{3}.
\]
It follows that (\ref{beta}) is equivalent to $\delta \le \tfrac{1}{2}\sqrt{3}$,
which means $\kappa\le (2+\sqrt{3})/(2\gamma+1)$.
Hence, if
\[
\kappa\leq \min\left\{2, \, \frac{2+\sqrt{3}}{2\gamma+1} \right\},
\]
then (\ref{kappa}) holds for $k=3$.
Selecting the upper bound for $\kappa$, yields that (\ref{condHV}b)
is fulfilled for $k=3$ whenever
\[
\theta \geq\max\left\{\frac{1}{4}, \frac{2\gamma+1}{4+2\sqrt{3}}\right\}.
\]
\begin{flushright}
\mbox{\tiny $\blacksquare$}
\end{flushright}
\end{proof}

Upon setting $\gamma=1$ in Theorem~\ref{theorem1} the resulting
sufficient conditions on $\theta$ for the CS, MCS, HV schemes
agree with those in \cite[Thms.~2.2, 2.5, 2.8]{IHW09}.
The above theorem thus forms a proper generalization of results
in \cite{IHW09}.
For the Do scheme, the obtained sufficient condition generalizes and
improves the corresponding result for this scheme from \cite{CS88}.
In particular, if $\gamma=1$ and $k=3$, then \cite{CS88} yields
$\theta\ge 3\sqrt{3} - \frac{9}{2}\approx 0.696$, whereas
Theorem~\ref{theorem1} yields $\theta\ge \frac{2}{3}$.

In view of Theorem~\ref{theorem1}, a smaller parameter value
$\theta$ can be chosen while retaining unconditional stability
if it is known that (\ref{gamma}) holds with certain given
$\gamma<1$.
This is useful, in particular since a smaller value $\theta$
often yields a smaller, i.e., more favorable, error constant.

The MCS scheme with the lower bound for $\theta$ given
by Theorem~\ref{theorem1} has been successfully used recently in the
actual application to the three-dimensional Heston--Hull--White PDE
from financial mathematics, see Haentjens \& In 't Hout \cite{HH12}.

The following theorem provides, for each ADI scheme, a necessary
condition on $\theta$ for unconditional stability.

\begin{theorem}\label{theorem2}
Let $k\ge 2$ and $\gamma \in [0,1]$ be given and consider any ADI
scheme from Section~\ref{intro}.
Suppose that the scheme is unconditionally stable whenever it is
applied to a system (\ref{ODE}), (\ref{splitting}) that is obtained
by central second-order FD discretization and splitting as described
in Section~\ref{intro} of any equation (\ref{PDE}) with positive
semidefinite $k\times k$ matrix $D$ satisfying (\ref{gamma}) and
periodic boundary condition.
Then $\theta$ must satisfy the following bound:\\\\
\noindent
$~~~~\bullet$ Do scheme:
\[
\theta \ge \max\left\{\frac{1}{2}\,,\, \frac{1}{2}\,d_k\,[(k-1)\gamma+1] \right\}
\quad with \quad
d_k=\left(1-\frac{1}{k}\right)^{k-1}
~~~~~~
\]
\noindent
$~~~\bullet$ CS scheme:
\[
\theta \ge \max\left\{\frac{1}{2}\,,\, \frac{1}{2}\,c_k\,k\gamma \right\}
\quad with \quad
c_k=\left(1-\frac{1}{k}\right)^{k}
~~~~~~~~~~~~~~~~~~~~~~~
\]
\noindent
$~~~\bullet$ MCS scheme:
\[
\theta \ge \max\left\{\frac{1}{4}\,,\, \frac{1}{2}\,b_k\,[(k-1)\gamma+1] \right\}
\quad with \quad
b_k=\frac{1}{1+\left(1+\frac{1}{k-1}\right)^{k-1}}
\]
\noindent
$~~~\bullet$ HV scheme:
\[
\theta \ge \max\left\{\frac{1}{4}\,,\, \frac{1}{2}\,a_k\,[(k-1)\gamma+1] \right\}
~~~~~~~~~~~~~~~~~~~~~~~~~~~~~~~~~~~~~~~~
\]
\begin{equation*}\label{alphastar}
\qquad ~~with~a_k~the~unique~solution~a\in\left(0\,,\,\frac{1}{2}\,\right)
~of~~2a\left(1+\frac{1-a}{k-1}\right)^{k-1}\!\!\!\!-1=0.
\end{equation*}
\end{theorem}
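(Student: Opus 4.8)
The plan is to prove each lower bound by the standard device for necessity: for every $\theta$ below the claimed threshold I would exhibit a \emph{genuinely realizable} configuration of scaled eigenvalues $(z_0,z_1,\ldots,z_k)$ for which (\ref{stab}) fails. Since Lemma~\ref{lemma1} records only necessary constraints on the $z_j$, I must produce an actual admissible equation (\ref{PDE}) and discretization, not merely abstract values satisfying (\ref{zproperties}). Two explicit families suffice. The \emph{single-direction} family takes $D=\mathrm{diag}(1,0,\ldots,0)$, so that (\ref{gamma}) holds trivially, $z_0=0$ and $z_2=\cdots=z_k=0$, while $z_1=-2r_{11}(1-\cos\phi_1)$ sweeps out $(-\infty,0]$ as the mesh ratio and $\phi_1$ vary. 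The \emph{symmetric extremal} family takes $D=(1-\gamma)I+\gamma J$ ($J$ the all-ones matrix), which is positive semidefinite for $\gamma\in[0,1]$ and saturates (\ref{gamma}), together with $\beta_{ij}=-1$ (so that $B=J$ is positive semidefinite), equal mesh widths, and a common angle $\phi_j\equiv\phi$. Using $\sin^2\phi+(1-\cos\phi)^2=2(1-\cos\phi)$ one finds $z_1=\cdots=z_k=-2r(1-\cos\phi)=:-s/\theta$ and $z_0=\gamma k(k-1)\,z_1$, so the Cauchy--Schwarz bound (\ref{zproperties}d) is attained with equality; here $s=y^2\ge0$ is free.

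Next I would reduce $|M|\le1$ to a sign condition for each scheme, as in the proof of Theorem~\ref{theorem1}. Writing $Z=z_0+z$ and using $z_j\le0$, $Z\le0$, the real functions factor cleanly: for Do, $|R|\le1\Leftrightarrow -Z\le 2p$; for CS, $\widetilde S-1=\tfrac{Z}{p}\bigl(1+\tfrac{z_0}{2p}\bigr)$, so $\widetilde S\le1\Leftrightarrow -z_0\le 2p$ whereas $\widetilde S\ge-1$ coincides with the Do condition once $z_0=0$; for MCS, $S-1=\tfrac{Z}{p^2}\bigl(p+\theta z_0+(\tfrac12-\theta)Z\bigr)$, so the instability $S>1$ occurs as soon as the factor $p+\theta z_0+(\tfrac12-\theta)Z$ turns negative; and for HV the reduction (\ref{condHV}) already shows $|T|\le1\Leftrightarrow 2p-1+\tfrac12 Z\ge0$, the remaining quadratic factor having negative discriminant. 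Evaluating the relevant inequality --- for Do and CS the bound $M\ge-1$, for MCS and HV the factor just displayed --- on the single-direction family and letting $s\to\infty$ produces the $\gamma$-independent floors $\theta\ge\tfrac12$ (Do, CS) and $\theta\ge\tfrac14$ (MCS, HV).

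Inserting the symmetric extremal family is the core computation. There $-z_0=\gamma k(k-1)s/\theta$, $-Z=kFs/\theta$ with $F:=(k-1)\gamma+1$, and $p=(1+s)^k$, so each condition rearranges into $\theta\ge(\text{rational function of }s)$ that must hold for all $s\ge0$. For Do, $-Z\le2p$ gives $\theta\ge\sup_s\tfrac{kFs}{2(1+s)^k}=\tfrac12 d_kF$, the maximizer being $s^*=1/(k-1)$. The CS inequality $-z_0\le2p$ involves $z_0$ alone and yields $\theta\ge\sup_s\tfrac{\gamma k(k-1)s}{2(1+s)^k}=\tfrac12 c_k\,k\gamma$ (same maximizer); this is precisely why CS carries the factor $k\gamma$ in place of $F$. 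For MCS the factor, after the cancellation $-\gamma k(k-1)s+kFs=ks$, becomes $(1+s)^k+ks-\tfrac{kFs}{2\theta}$, so $\theta\ge\tfrac12 F\sup_s\tfrac{ks}{(1+s)^k+ks}=\tfrac12 b_kF$, the supremum being attained at $s^*=1/(k-1)$ and equal to $b_k=d_k/(1+d_k)$. For HV one obtains $\theta\ge\sup_s\tfrac{kFs}{2(2(1+s)^k-1)}$.

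The step I expect to be hardest is the HV supremum, whose maximizer is not elementary. Setting the derivative of $s\mapsto\tfrac{ks}{2(2(1+s)^k-1)}$ to zero gives $2(1+s^*)^{k-1}(1-(k-1)s^*)=1$; with the substitution $a=1-(k-1)s^*$ this is exactly the defining equation $2a\bigl(1+\tfrac{1-a}{k-1}\bigr)^{k-1}-1=0$ of $a_k$, and a short manipulation using the optimum relation then gives $\sup_s(\cdot)=\tfrac{1}{4(1+s^*)^{k-1}}=\tfrac12 a_k$, so the HV bound is $\tfrac12 a_kF$. To make this rigorous I would verify that $g(a):=2a\bigl(1+\tfrac{1-a}{k-1}\bigr)^{k-1}-1$ has $g'(a)=2\bigl(1+\tfrac{1-a}{k-1}\bigr)^{k-2}(1-a)\tfrac{k}{k-1}>0$ on $(0,1)$, with $g(0)=-1<0$ and $g(\tfrac12)=\bigl(1+\tfrac{1}{2(k-1)}\bigr)^{k-1}-1>0$, so that $a_k$ exists, is unique, and lies in $(0,\tfrac12)$; this simultaneously certifies $s^*\in(0,1/(k-1))$ and hence that the critical point is an interior maximum. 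Taking, for each scheme, the maximum of the $\gamma$-independent floor and the $\gamma$-dependent bound yields the four stated conditions, and specializing to $k=2,3$ recovers the sufficient conditions of Theorem~\ref{theorem1}, confirming sharpness in those dimensions.
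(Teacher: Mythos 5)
Your proposal is correct and follows essentially the same route as the paper: the same extremal matrix $D=(1-\gamma)I+\gamma J$, the same degenerate single-direction configuration for the floors $\tfrac12$ and $\tfrac14$, the same equal-angle configuration reducing each bound to a one-variable supremum with maximizer $s^*=1/(k-1)$ for Do, CS, MCS, and the same substitution $a=1-(k-1)s^*$ identifying the HV supremum with $a_k$ --- indeed you also carry out, correctly, the sign-factor reductions for Do, CS, MCS that the paper leaves to the reader, and the ``elementary arguments'' for $\sup h=a_k$ that it cites from In~'t~Hout \& Welfert. The only real deviation is that you fix $\beta_{ij}\equiv-1$ so that equality in (\ref{zproperties}d) holds exactly for every angle, whereas the paper keeps the stencil parameters arbitrary and lets $\phi\to 0$ with $\alpha=2\theta r(1-\cos\phi)$ held fixed so that the averaged parameter $\overline{\beta}$ drops out --- a slightly stronger conclusion, since the paper's necessary condition is then valid for each fixed admissible stencil (e.g.\ the standard choice $\beta_{ij}=0$), while your argument needs the unconditional-stability hypothesis to quantify over the stencil choice as well.
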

\begin{proof}
As in the foregoing proof, we consider here the HV scheme and
leave the (analogous) proofs for the other three ADI schemes
to the reader.

Consider the $k\times k$ matrix $D=(d_{ij})$ with $d_{ii}=1$ and
$d_{ij} = \gamma$ whenever $1\le i\not= j \le k$.
Clearly, $D$ is symmetric positive semidefinite and (\ref{gamma})
holds.
Let $\Delta x_i \equiv \Delta x >0$, so that
$r_{ij} \equiv r:=\Delta t /(\Delta x)^2$.

First, choose the angles $\phi_j$ in (\ref{eig}) equal to zero for
$j\ge 2$ .
Then the eigenvalues $z_j$ are given by $z_0=0$, $z_1=-2r(1-\cos\phi_1)$
and $z_2=z_3=\ldots=z_k=0$.
In view of (\ref{condHV}b), we have
\[ 2p-1+\tfrac{1}{2}(z_0+z) = 1+(\tfrac{1}{2}-2\theta)z_1 \geq 0 \]
whenever $r>0$, $\phi_1\in [0,2\pi)$.
This immediately implies $\theta \ge \tfrac{1}{4}$.

Next, choose all angles $\phi_j$ in (\ref{eig}) the same, i.e.,
$\phi_j \equiv \phi$.
Then the eigenvalues $z_j$ are given by
\begin{subeqnarray*}
z_0&=&-rk(k-1)\gamma [\, \sin^2\phi - \overline{\beta}\,
(1-\cos\phi)^2\, ]\, ,\phantom{\sum}\\
z_j &=&-2r(1-\cos\phi), \quad j=1,2,\ldots,k,
\end{subeqnarray*}
where
\[
\overline{\beta} = \frac{\sum_{i \not= j} \beta_{ij}}{k(k-1)}\,.
\]
Note that $|\overline{\beta}| \le 1$.
By (\ref{condHV}b), it must hold that
\[
\theta \ge - \frac{1}{2} \frac{\theta z_0+\theta z}{2p-1} =
\frac{k}{2} \, \frac{(k-1)\gamma \theta r [\, \sin^2\phi - \overline{\beta}\,
(1-\cos\phi)^2\, ]+2\theta r (1-\cos\phi)}
{2[\, 1+2\theta r (1-\cos\phi)\,]^k-1}
\]
whenever $r>0$, $\phi\in [0,2\pi)$.
Setting $\alpha = 2\theta r(1-\cos\phi)$, this yields
\[
\theta \ge \frac{k}{2} \frac{(k-1)\gamma \alpha [\,1+\cos\phi
 -\overline{\beta}\, (1-\cos\phi)\, ]/2+\alpha}{2(1+\alpha)^k-1}
\]
for all $\alpha >0$, $\phi\in (0,2\pi)$.
Taking the supremum over $\phi$, gives
\[
\theta \ge \frac{1}{2}\, h(\alpha)\,[(k-1)\gamma+1]
\quad {\rm for~all}~~\alpha >0,
\]
where
\[
h(\alpha) = \frac{k\alpha}{2(1+\alpha)^k-1}.
\]
By elementary arguments (cf.~\cite{IHW09}) it follows that the
function $h$ has an absolute maximum which is equal to $a_k$
given in the theorem.
\begin{flushright}
\mbox{\tiny $\blacksquare$}
\end{flushright}
\end{proof}

Upon setting $\gamma=1$ in Theorem~\ref{theorem2}, the necessary
conditions on $\theta$ for the CS, MCS, HV schemes reduce to
those given in \cite[Thms.~2.3, 2.6, 2.9]{IHW09}.
Further, for the Do scheme and $\gamma=1$ there is agreement
with the necessary condition from \cite{CS88}.

%The above theorem clearly implies that, for any given fixed
%$\theta$ and fixed nonzero $\gamma$, unconditional stability
%under (\ref{gamma}) in all spatial dimensions $k\ge 2$ is not
%possible.
%Relatedly, unconditional stability of an ADI scheme with
%given fixed $\theta$ in the application to semidiscretized
%equations (\ref{PDE}), (\ref{gamma}) in any spatial dimension
%$k$, it is necessary that $\gamma$ decreases as $k$ increases,
%at a rate of at least $1/k$.

It is readily verified that, for each ADI scheme, the sufficient
conditions of Theorem~\ref{theorem1} and the necessary conditions
of Theorem~\ref{theorem2} are identical whenever $k=2$ or $k=3$
and $0\le \gamma \le 1$.
Hence, in two and three spatial dimensions, these conditions are
sharp.

The interesting question arises whether the necessary conditions of
Theorem~\ref{theorem2} are also sufficient in spatial dimensions
$k\ge 4$.
In \cite{IHW09} it was proved that this is true for the HV scheme
and $\gamma = 1$.
It can be seen, however, that the proof from \cite{IHW09} does
not admit a straightforward extension to values $\gamma < 1$.
Further, in the case of the Do, CS, MCS schemes a proof is not
clear at present.
Accordingly, we leave this question for future research.

\setcounter{equation}{0}
\setcounter{theorem}{0}
\section{Numerical illustration}\label{numexp}
In this section we illustrate the main results of this paper,
Theorems~\ref{theorem1} and \ref{theorem2}.
We present experiments where all
four ADI schemes (\ref{Do})--(\ref{HV}) are applied in the
numerical solution of multidimensional diffusion equations
(\ref{PDE}) possessing mixed derivative terms.
The PDE is semidiscretized by central second-order finite
differences as described in Subsection \ref{FD}, with
$\beta_{ij} \equiv 0$, and the semidiscrete matrix $A$
is splitted as described in Subsection~\ref{ADI}.
The boundary condition is taken to be periodic
(in this case $g(t)\equiv 0$).

The first experiment deals with (\ref{PDE}) in two
spatial dimensions. We choose initial function
\begin{equation*}
\label{ic}
u(x_1,x_2,0) = e^{-4[\sin^2(\pi x_1)+\cos^2(\pi x_2)]}~~~
(0\le x_1, x_2 \le 1)
\end{equation*}
and diffusion matrix $D$ given by
\[
D = 0.025
\left(
  \begin{array}{cc}
    1 & 2\gamma \\
    2\gamma & 4 \\
  \end{array}
\right).
\]
This matrix $D$ is positive semidefinite and the condition
(\ref{gamma}) holds whenever $\gamma\in [0,1]$.
Consider $\gamma=0.9$.
Then the lower bounds on $\theta$ provided by
Theorem~\ref{theorem1} for the MCS and HV schemes are,
rounded to three decimal places, equal to 0.317 and 0.278,
respectively.
For the Do and CS schemes, the lower bound always
equals $\tfrac{1}{2}$ in two spatial dimensions,
independently of $\gamma$.

Figure~\ref{fig1} shows for
$\Delta x_1 = \Delta x_2 = 1/80$ the semidiscrete
solution values $U(0)$ and $U(5)$ displayed on the grid
in $\Omega$, so that they represent the exact solution
$u$ at $t=0$ and $t=5$.
\vskip0.3cm
\begin{figure}[h]
\begin{center}
\begin{tabular}{cc}
$t=0$ & $t=5$\\
\includegraphics[width=0.45\textwidth]{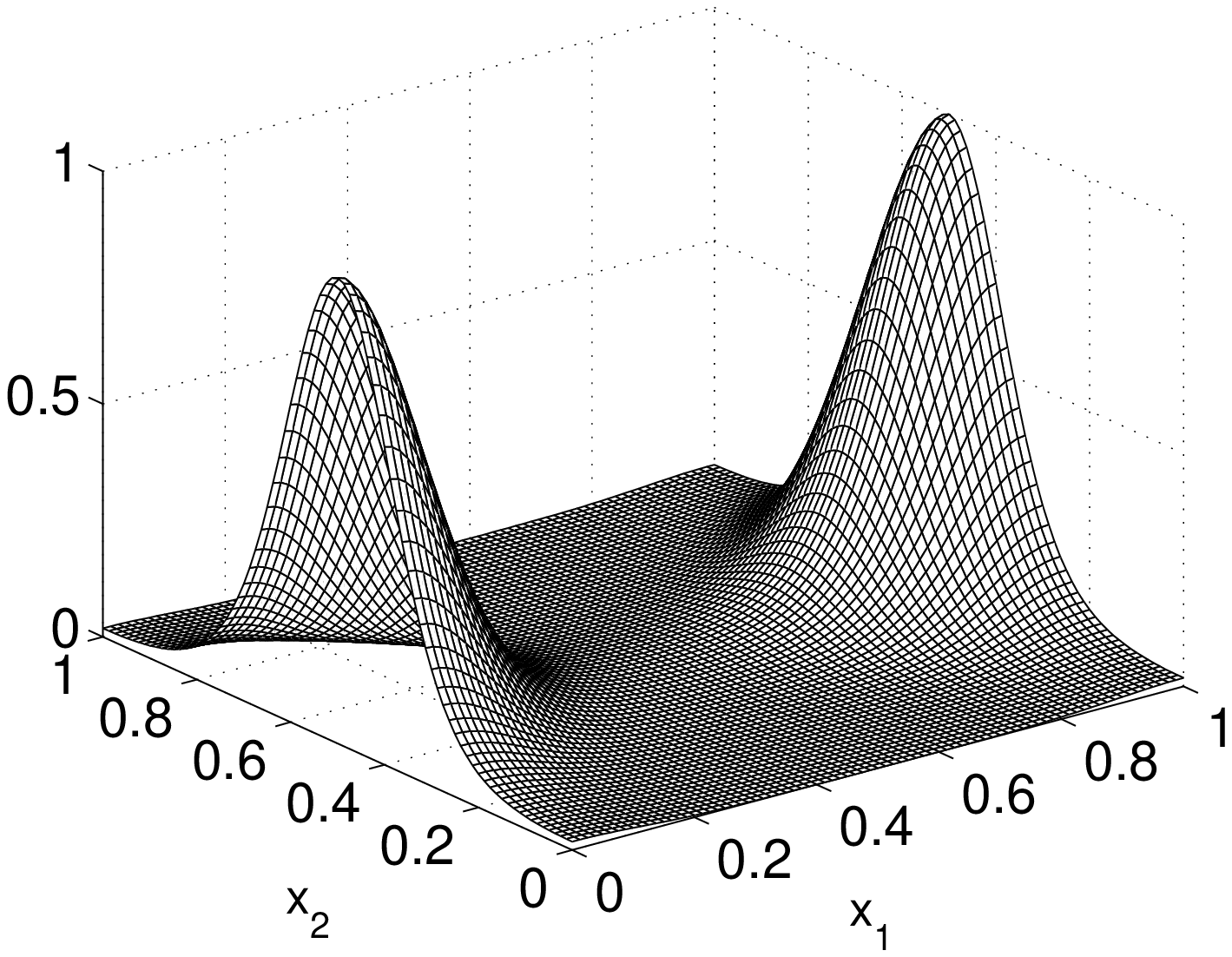}&
\includegraphics[width=0.45\textwidth]{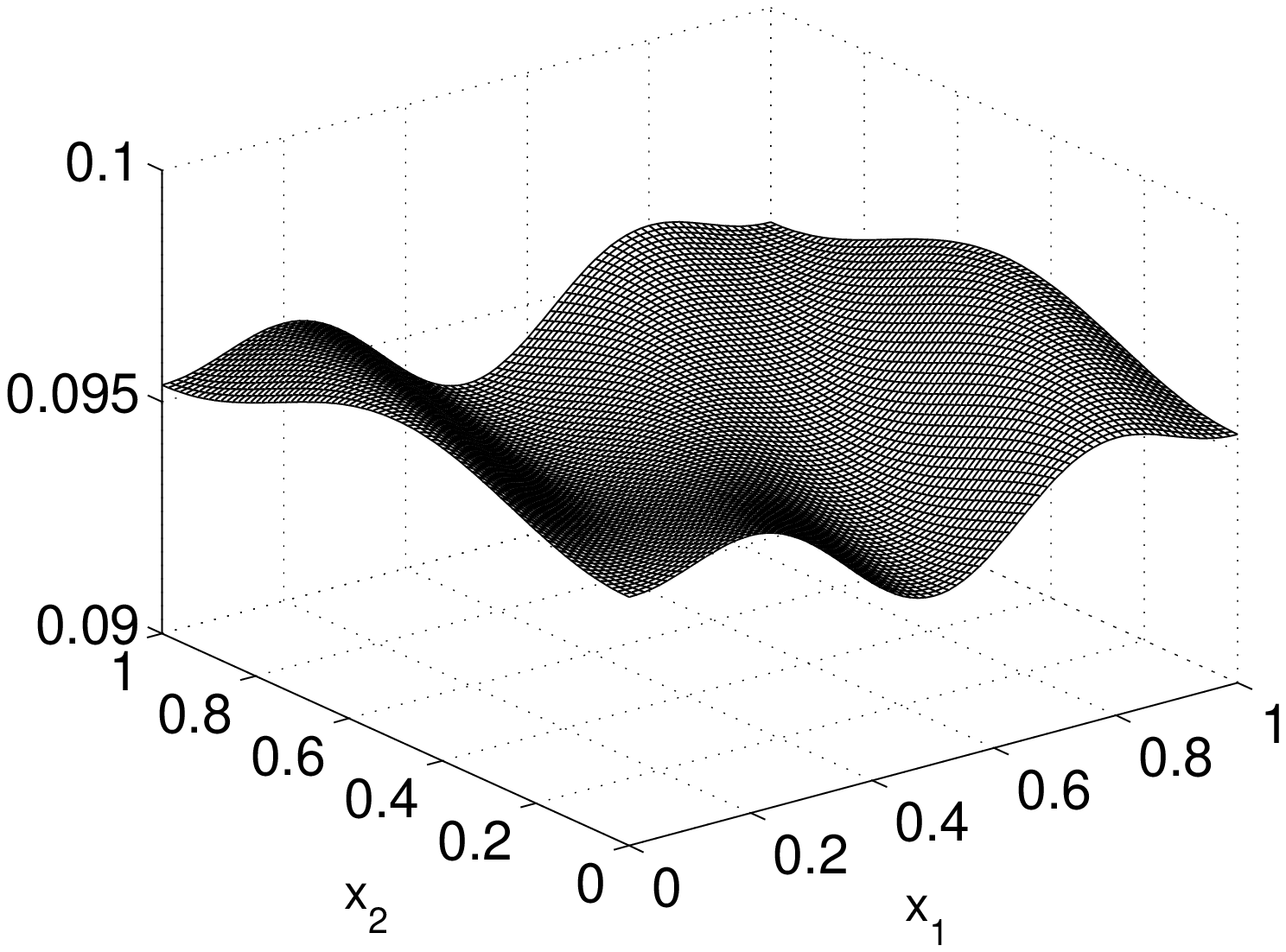}
\end{tabular}
\caption{Exact solution $u$ of 2D problem at $t=0$
and $t=5$.
Note the different scales on the vertical axes.}
\label{fig1}
\end{center}
\end{figure}
To the semidiscrete systems with $\Delta x_1 = \Delta x_2 = 1/m$
for $m=40,\, 80$ we have applied the Do, CS, MCS, HV schemes
using their exact lower bound values~$\theta$ (above) as well
as the values 0.45, 0.45, 0.29, 0.25, respectively, which are
approximately 90\% of these.
For a sequence of step sizes $\Delta t = 1/N$ with
$10^{-3} \le \Delta t \le 10^0$ we computed the {\it global
temporal errors}\, at time $t=5$, defined~by
\[
e(\Delta t; m) = m^{-1} \left| U(5)-U_{5N} \right|_2,
\]
where $|\cdot|_2$ is the Euclidean vector norm
and $1/m$ is a normalization factor.
Figure~\ref{2Derrors} displays the obtained result.

From the right column of Figure~\ref{2Derrors} it is
clear that, for each ADI scheme, using the
smaller value $\theta$ leads to large temporal errors
for natural step sizes.
We verified that these large errors correspond
to the spectral radii of the pertinent iteration
matrices being greater than 1. This indicates that
they are caused by a lack of stability.
Additional experiments with larger values $t$ (e.g.
$t=10$) also shows that the large temporal errors
are amplified, as one may expect.

The left column of Figure~\ref{2Derrors} displays the
results in the case where the lower bound values
$\theta$, given by Theorem~\ref{theorem1}, are used.
Then all temporal errors are bounded from above by
a moderate constant and decrease monotonically when
$\Delta t$ decreases.
This suggests an unconditionally stable behavior
of the schemes.
A further examination in this case shows that the
global temporal errors for the Do scheme can be
bounded from above by $C \Delta t$ and for the CS,
MCS, HV schemes by $C (\Delta t)^2$ (whenever
$\Delta t >0$) with constants $C$ depending
on the scheme.
\begin{figure}
\begin{center}
\begin{tabular}{c c}
         \includegraphics[width=0.5\textwidth]{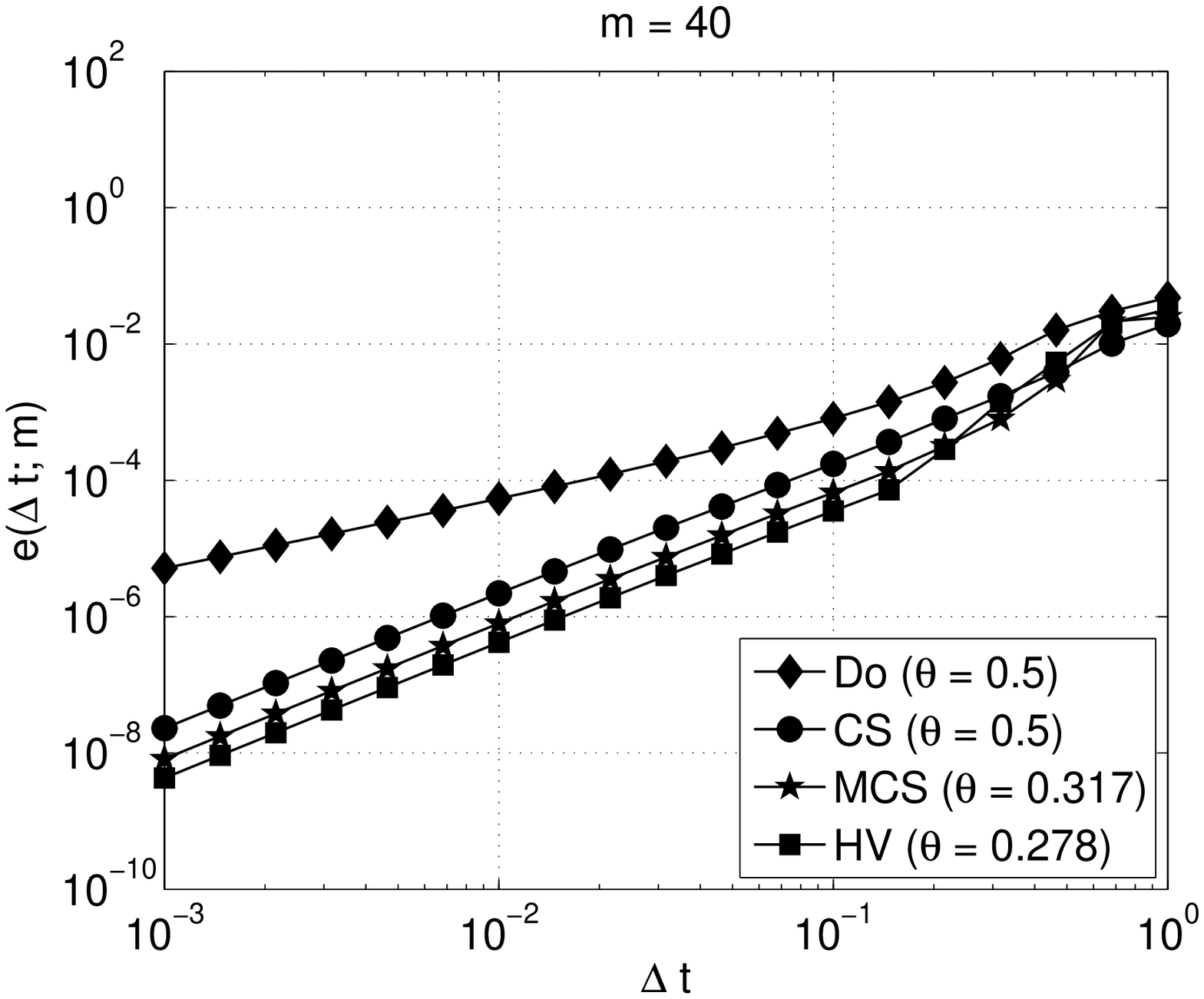}&
         \includegraphics[width=0.5\textwidth]{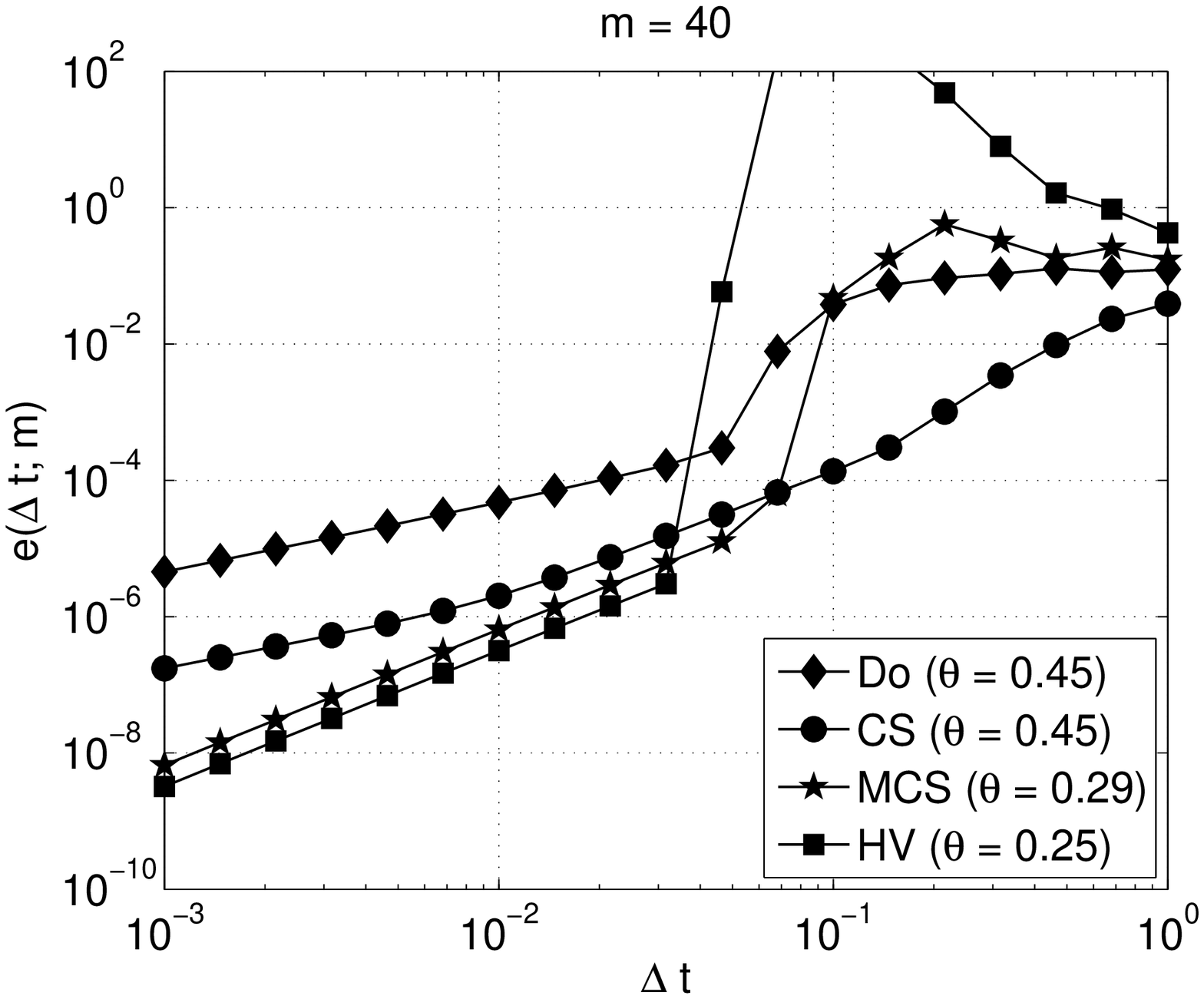}\\
         \includegraphics[width=0.5\textwidth]{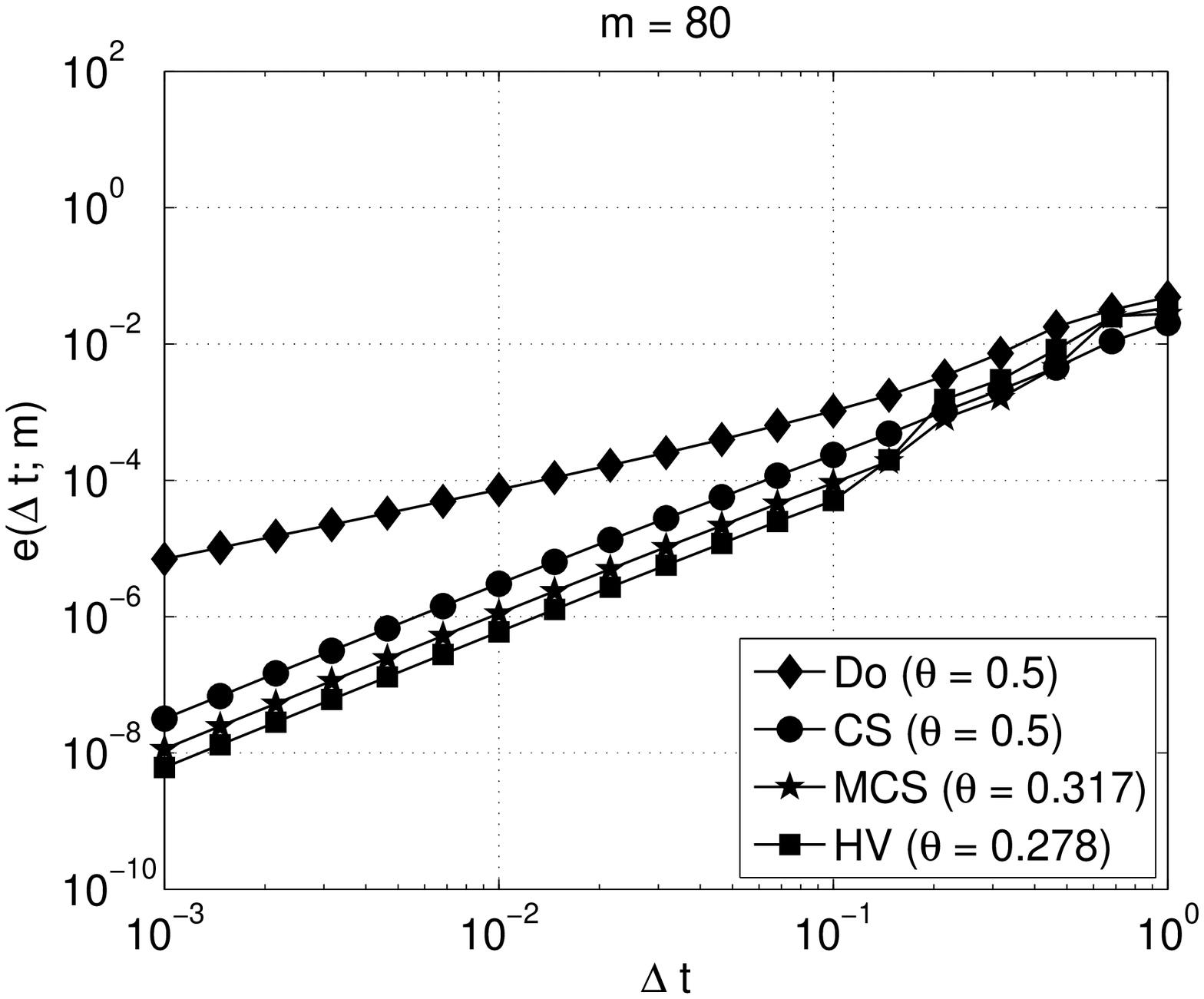}&
         \includegraphics[width=0.5\textwidth]{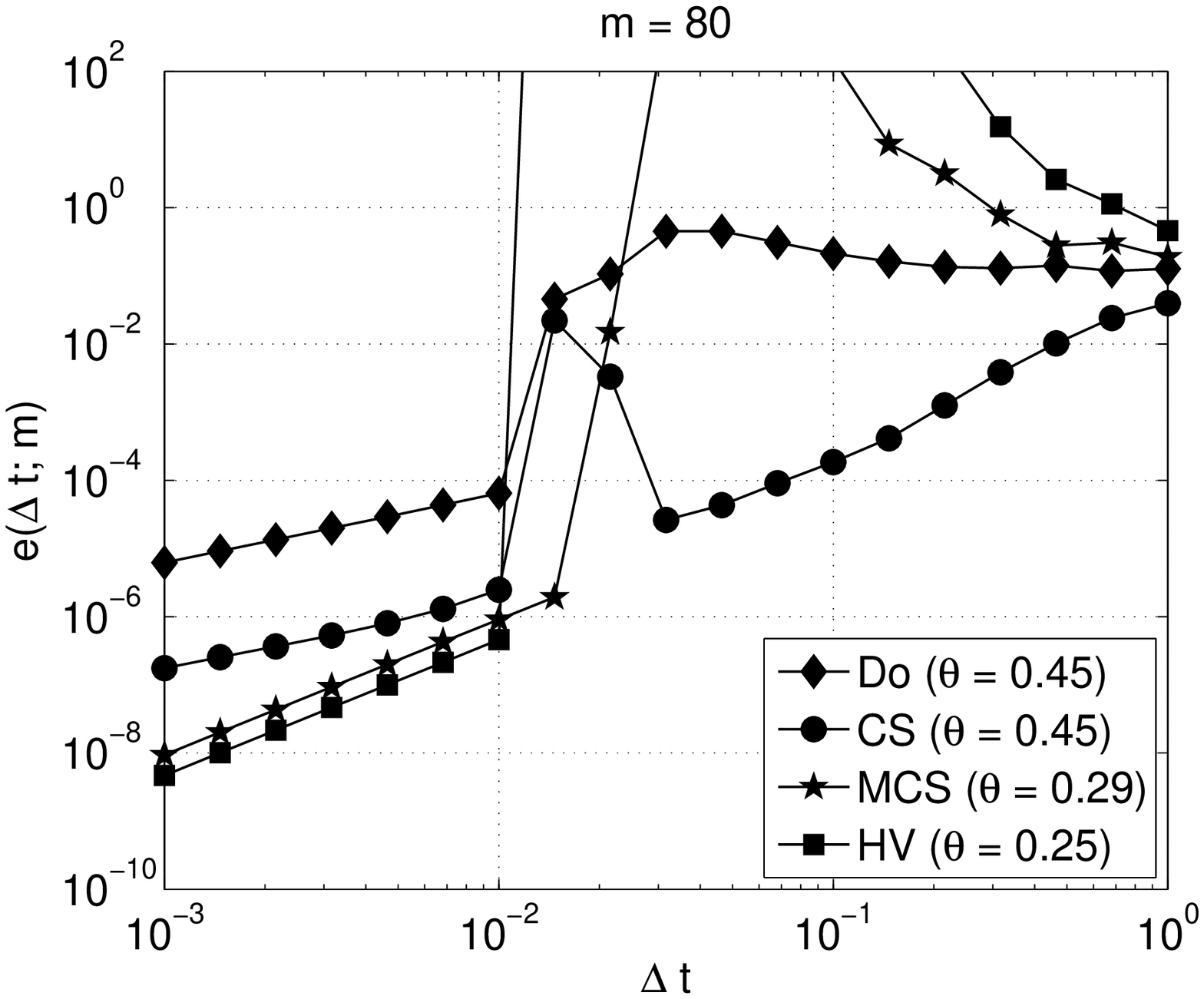}
\end{tabular}
\caption{Global temporal errors $e(\Delta t; m)$ versus $\Delta t$
for Do, CS, MCS, HV schemes when applied to a semidiscretized
2D problem (\ref{PDE}) with $\gamma=0.9$.
Top row: $m=40$.
Bottom row: $m=80$.
Left column: lower bound values $\theta$ given by
Theorem~\ref{theorem1}.
Right column: values $\theta$ that are about 90\% of these.
}
\label{2Derrors}
\end{center}
\end{figure}
\noindent
This clearly agrees with the respective orders of
consistency of the schemes.
Moreover, for each scheme the constant $C$ is only
weakly dependent on the number of spatial grid points,
determined by $m$, indicating that the error bounds
are valid in a stiff, hence favorable, sense.
We remark that similar conclusions were found for
larger values $t$.
Further we numerically verified that upon increasing
$\theta$ from its lower bound value, the error
constant $C$ increases, as mentioned in the discussion
following Theorem~\ref{theorem1}.\\

The second experiment deals with (\ref{PDE}) in three
spatial dimensions. We take initial function
\begin{equation*}
\label{ic}
u(x_1,x_2,x_3,0) = e^{-[\cos^2(\pi x_1)+\cos^2(\pi x_2)+\cos^2(\pi x_3)]}~~~
(0\le x_1, x_2, x_3 \le 1)
\end{equation*}
and diffusion matrix $D$ given by
\[
D = 0.025
\left(
  \begin{array}{ccc}
    1 & 2\gamma & \gamma\\
    2\gamma & 4 & 2\gamma\\
    \gamma & 2\gamma & 1
  \end{array}
\right),
\]
which is positive semidefinite and such that condition
(\ref{gamma}) holds whenever $\gamma\in [0,1]$.
Here we take $\gamma=0.75$.
Then the lower bounds on $\theta$ given by
Theorem~\ref{theorem1} for the Do, MCS, HV schemes are,
rounded to three decimal places, equal to 0.556, 0.385,
0.335, respectively.
For the CS scheme the lower bound is always
$\tfrac{1}{2}$ in three spatial dimensions,
independently of $\gamma$.
To the semidiscrete systems with
$\Delta x_1 = \Delta x_2 = \Delta x_3 = 1/m$ for $m=40,\, 80$
we have applied the Do, CS, MCS, HV schemes using their
exact lower bound values~$\theta$ as well as the values
0.5, 0.45, 0.35, 0.3, respectively, which are approximately
90\% of these.
Figure~\ref{3Derrors} displays the normalized global temporal
errors
\[
e(\Delta t; m) = m^{-3/2}\, \left| U(5)-U_{5N} \right|_2
\]
for a sequence of step sizes $\Delta t = 1/N$ with
$10^{-3} \le \Delta t \le 10^0$.
\begin{figure}
\begin{center}
\begin{tabular}{c c}
         \includegraphics[width=0.5\textwidth]{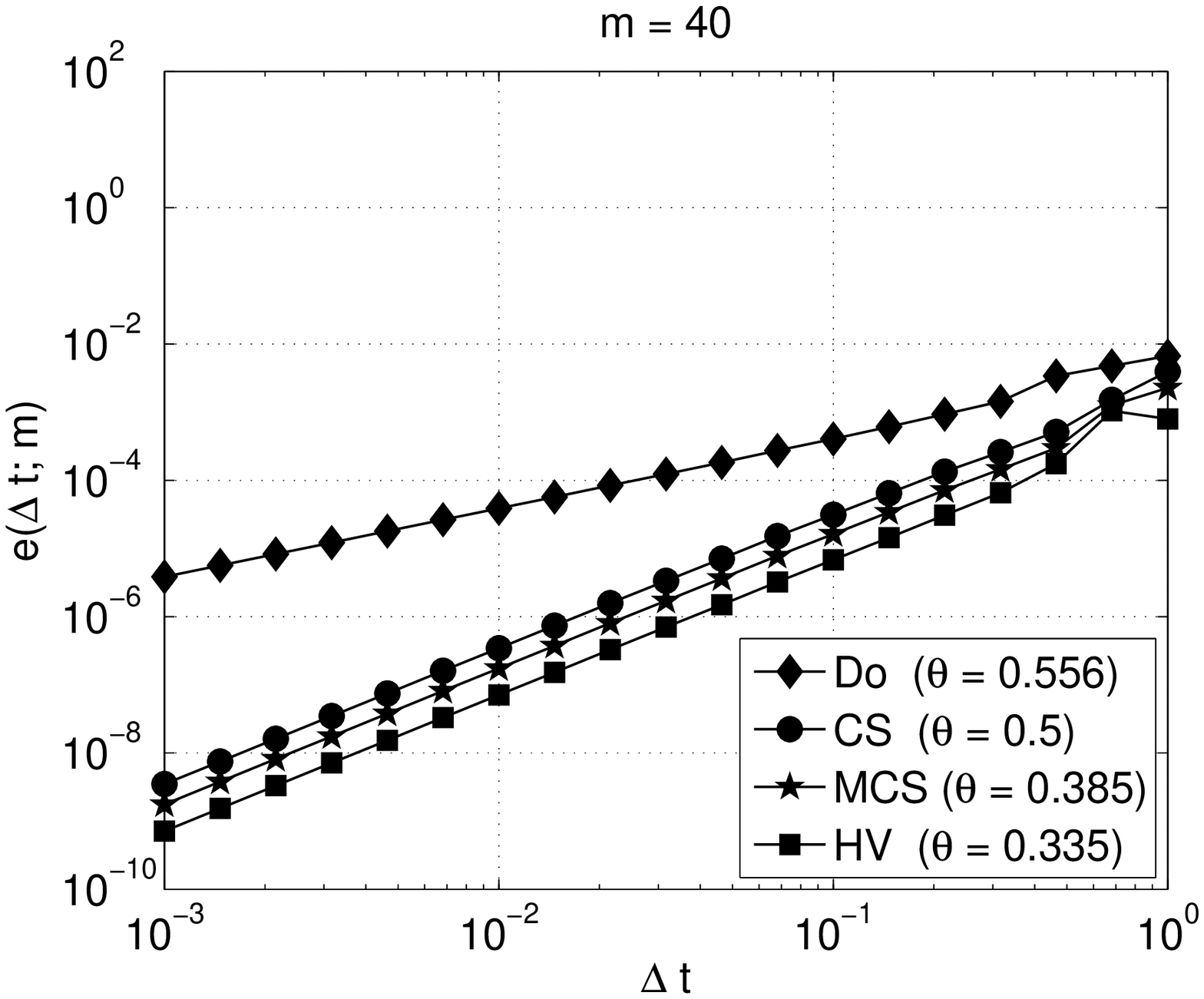}&
         \includegraphics[width=0.5\textwidth]{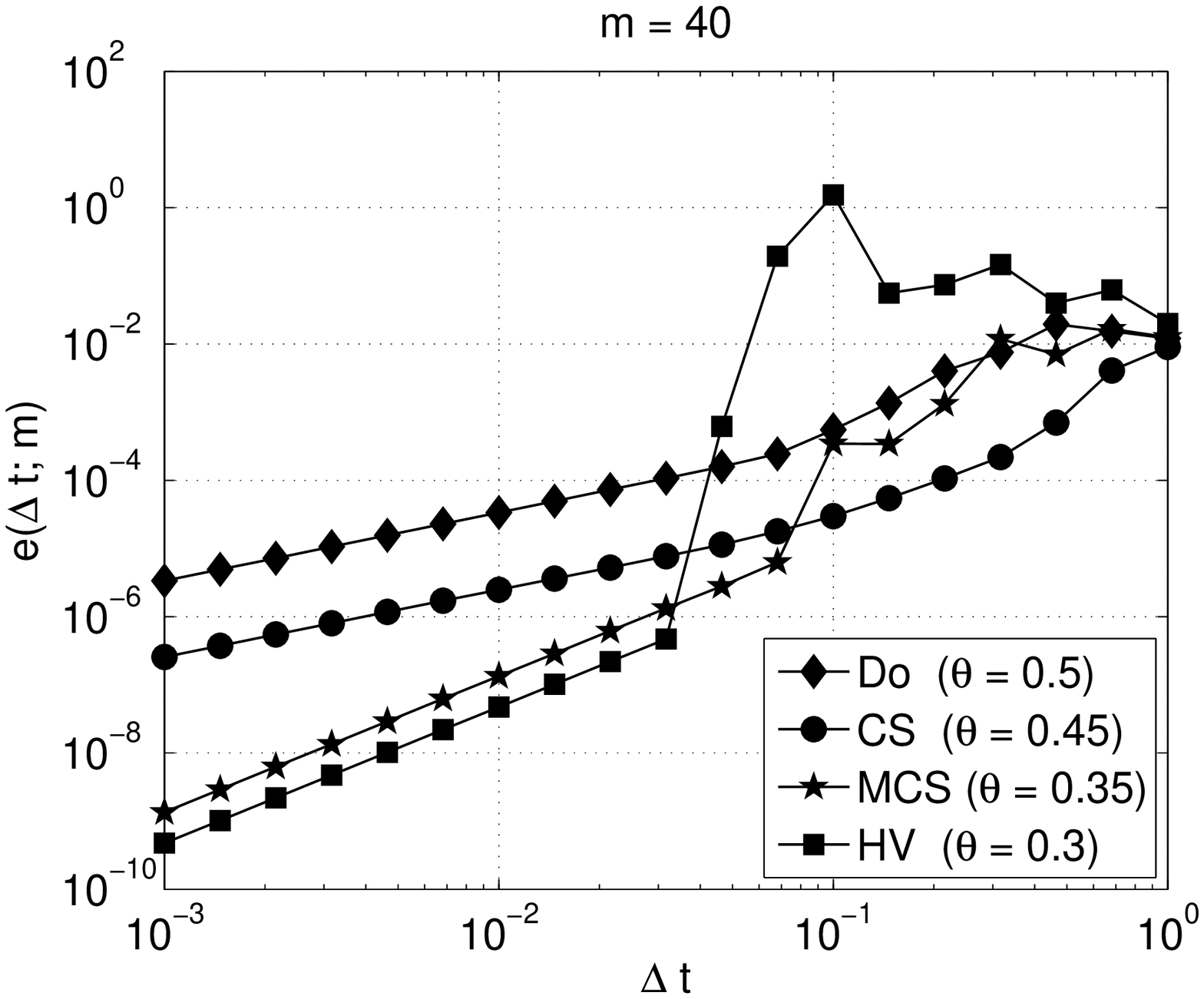}\\
         \includegraphics[width=0.5\textwidth]{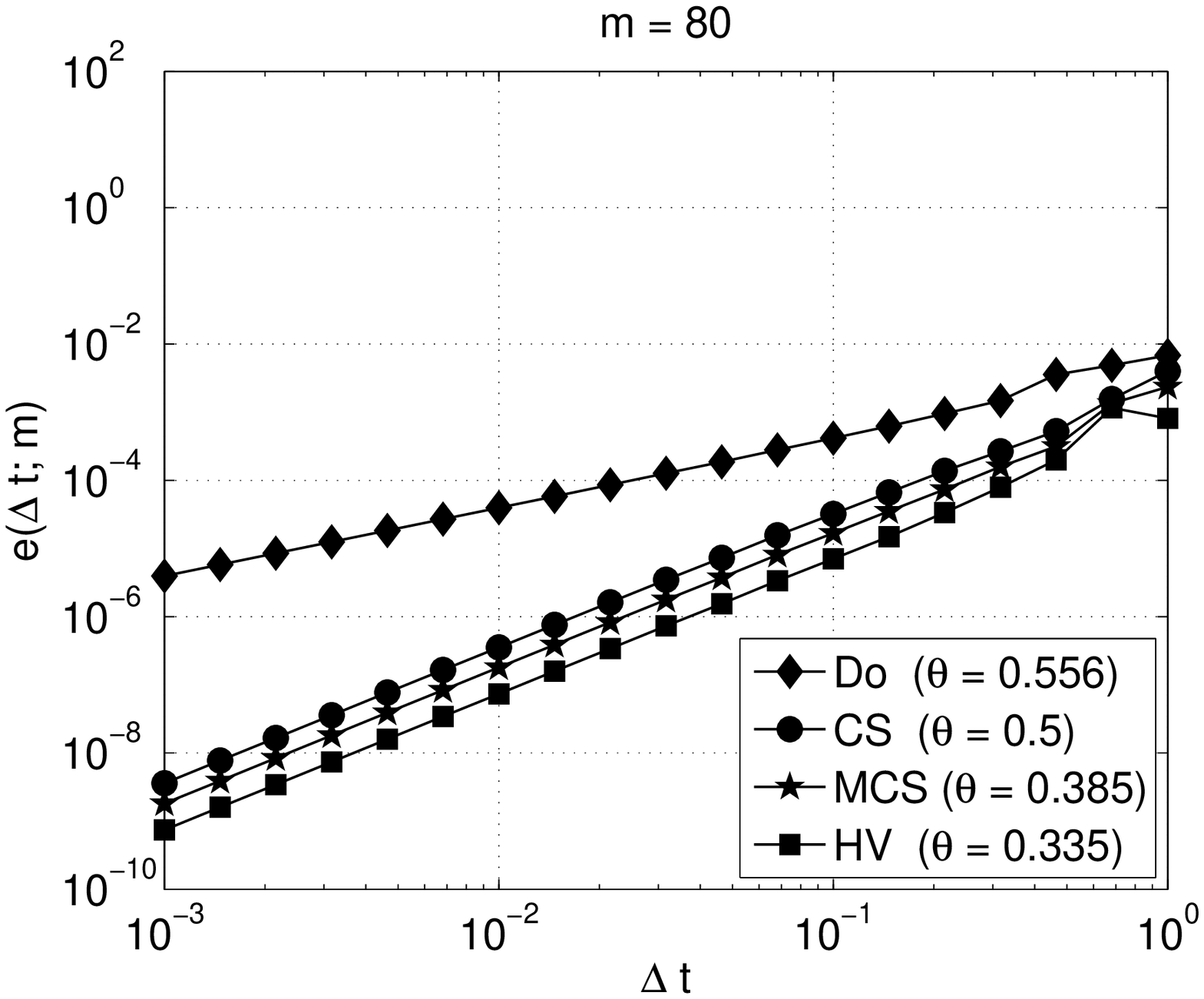}&
         \includegraphics[width=0.5\textwidth]{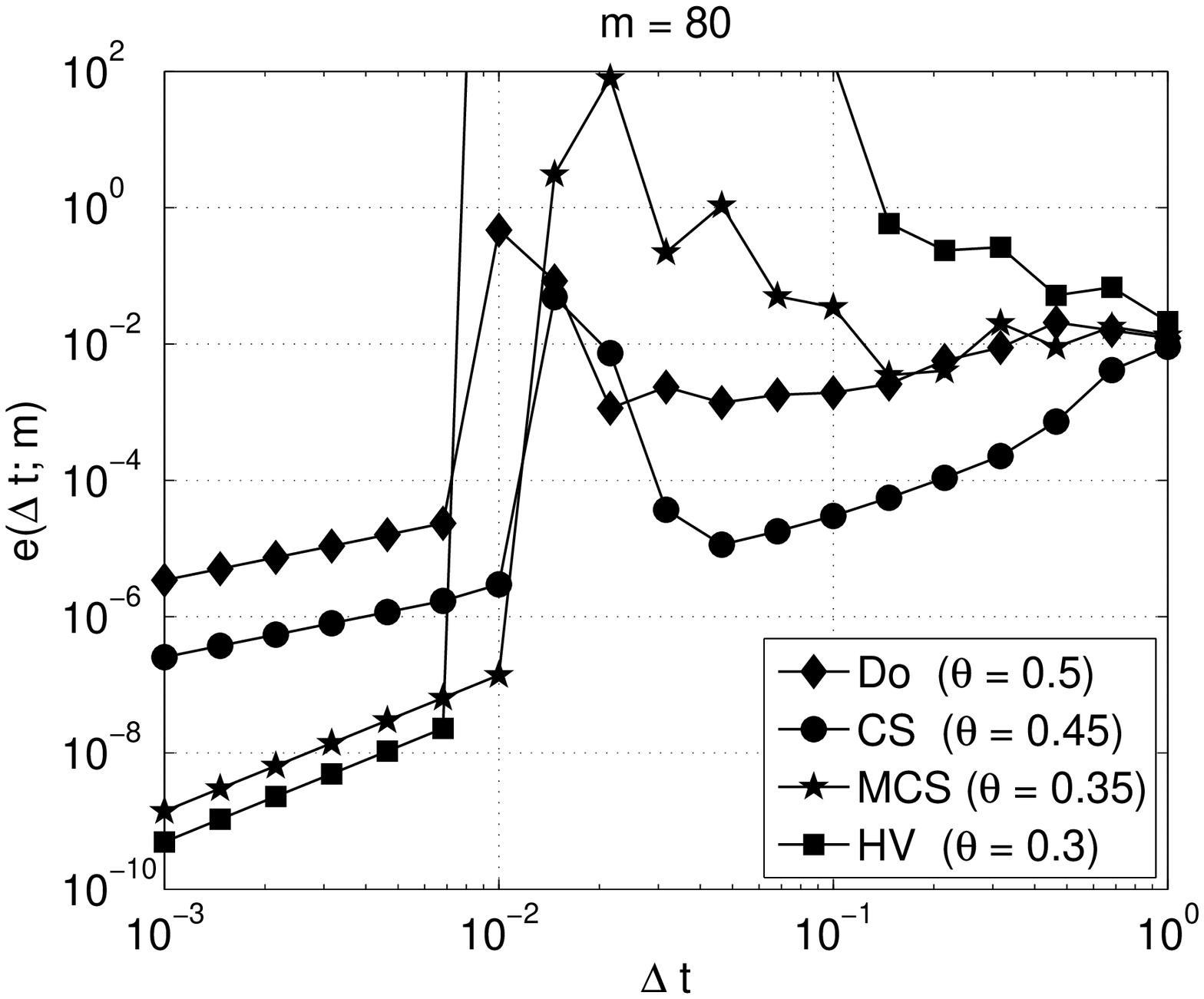}
\end{tabular}
\caption{Global temporal errors $e(\Delta t; m)$ versus
$\Delta t$
for Do, CS, MCS, HV schemes when applied to a semidiscretized
3D problem (\ref{PDE}) with $\gamma=0.75$.
Top row: $m=40$.
Bottom row: $m=80$.
Left column: lower bound values $\theta$ given by
Theorem~\ref{theorem1}.
Right column: values $\theta$ that are about 90\% of these.
}
\label{3Derrors}
\end{center}
\end{figure}
Exactly the same observations can be made as in the experiment
(above) for the two-dimensional case.
The large temporal errors for each ADI scheme when applied
with the smaller value $\theta$, as seen in the right
column of Figure~\ref{3Derrors}, correspond to instability
of the scheme.
When applied with their lower bound values $\theta$, given
by Theorem~\ref{theorem1}, the error behavior for all ADI
schemes is in agreement with unconditional stability of
the schemes.
Moreover, in this case a stiff order of convergence is
observed that is equal to one for the Do scheme and equal
to two for the CS, MCS and HV schemes.

\setcounter{equation}{0}
\setcounter{theorem}{0}
\section{Conclusions}\label{concl}
In this paper we analyzed stability in the von Neumann
sense of four well-known  ADI schemes - the Do, CS, MCS
and HV schemes - in the application to multidimensional
diffusion equations with mixed derivative terms.
Such equations are important, notably, to the field
of financial mathematics.
Necessary and sufficient conditions have been derived
on the parameter $\theta$ for unconditional stability
of each ADI scheme by taking into account the actual
size of the mixed derivative terms, measured
by the quantity $\gamma \in [0,1]$.
Our two main theorems generalize stability results
obtained by Craig \& Sneyd~\cite{CS88} and In 't Hout
\& Welfert~\cite{IHW09}.
Ample numerical experiments have been presented,
illustrating the main theorems and also providing
insight into the convergence behavior of all
schemes.

Among issues for future research,
it is of much interest to derive sufficient
conditions on $\theta$ for unconditional
stability of each ADI scheme in arbitrary
spatial dimensions $k\ge 4$ and arbitrary
$\gamma \in [0,1]$.
Also, it is of much interest to extend the
results obtained in this
paper to equations with advection terms.
This leads to general complex, instead of
real, eigenvalues, which forms a nontrivial
feature for the analysis,
cf.~e.g.~\cite{IHM11,IHW07}.

\end{document}